\documentclass[12pt,a4paper]{amsart}
\usepackage{mathrsfs}
\usepackage{amssymb,amsmath,amsthm,color}
\usepackage{graphicx,mcite}
\usepackage{hyperref}
\usepackage{url}
\usepackage{setspace}
\textheight=8.5in \textwidth=5.5in

\newcommand{\loc}{\mbox{loc}}

\newtheorem{theorem}{Theorem}[section]
\newtheorem{lemma}[theorem]{Lemma}
\newtheorem{proposition}[theorem]{Proposition}
\newtheorem{corollary}[theorem]{Corollary}

\theoremstyle{definition}

\newtheorem{remark}[theorem]{Remark}

\numberwithin{equation}{section}

%    Absolute value notation

%    Blank box placeholder for figures (to avoid requiring any
%    particular graphics capabilities for printing this document).

%\textwidth=5.5in

\begin{document}

\title[Injectivity of twisted spherical means]
{Non-harmonic cones are sets of injectivity for the twisted spherical means on $\mathbb C^n$}
%\runningtitle{Sets of Injectivity for Weighted Twisted Spherical Means}

%    Information for first author
\author{R. K. Srivastava}
 %Address of record for the research reported here
%\address{E. K. Narayanan, Department of Mathematics, Indian Institute of Science, Bangalore, India 560012.}

\address{R. K. Srivastava, Department of Mathematics, Indian Institute of Technology, Guwahati, India 781039.}
%    Current address
%\curraddr{Department of Mathematics and Statistics,
%Case Western Reserve University, Cleveland, Ohio 43403}
\email{rksri@iitg.ernet.in}
%\thanks will become a 1st page footnote.
%\thanks{The first author was supported in part by NSF Grant \#000000.}

%    General info
\subjclass[2000]{Primary 43A85; Secondary 44A35}

\date{\today}

%\dedicatory{Dedicated to Prof. E. M. Stein on the occasion of his 80th birthday.}

\keywords{Heisenberg group, spherical harmonics, twisted convolution.}

\begin{abstract}
In this article, we prove that a complex cone is a set of injectivity for the
twisted spherical means for the class of all continuous functions on $\mathbb C^n$
as long as it does not completely lay on the level surface of any bi-graded homogeneous
harmonic polynomial on $\mathbb C^n.$  Further, we produce examples of such level surfaces.
\end{abstract}

\maketitle

\section{Introduction}\label{section1}
Let $\mu_r$ be the normalized surface measure on the sphere $S_r(x).$ Suppose $F\subseteq L^1_{\tiny\loc}(\mathbb R^n).$
We say that $S\subseteq\mathbb R^n$ is a set of injectivity for the spherical means for $F$
if for $f\in F$ with $f\ast\mu_r(x)=0, \forall r>0$ and $\forall x\in S,$ implies $f=0.$

\smallskip
In an interesting article by Zalcman et al. \cite{AVZ}, it has been shown that
a real cone $C$ in $\mathbb R^d~(d\geq2)$ is a set of injectivity for the spherical
means for the class of all continuous functions on $\mathbb R^d$ if and only if $C$
does not lay on the level surface of any homogeneous harmonic polynomial on $\mathbb R^d.$
We look at this problem in the Heisenberg group set up. In particular, for the twisted
spherical mean on $\mathbb C^n.$ We prove an analogous result for the twisted spherical
means (TSM) for the class of all continuous functions on  $\mathbb C^n.$

\smallskip
Let $\mu_r$ be the normalized surface measure on the sphere $S_r(z)=\{w\in\mathbb C^n:~|z-w|=r\}.$
For a locally integrable function $f$ on $\mathbb C^n,$ we define its twisted
spherical mean on the sphere $S_r(z)$ by
\[f\times\mu_r(z)=\int_{|w|=r}~f(z-w)e^{\frac{i}{2}\text{Im}(z.\bar{w})}~d\mu_r(w).\]

\smallskip
Let $F\subseteq L^1_{\tiny\loc}(\mathbb C^n).$  We say $S\subseteq\mathbb C^n$
is a set of injectivity for twisted spherical means for $F$ if for
$f\in F$ with $f\times\mu_r(z)=0, \forall r>0$ and $\forall z\in S,$
implies $f=0$ a.e.  The results on set of injectivity differ in the choice
of sets and the class of functions considered. We would like to refer to
\cite{AR, NT1, Sri}, for some results on the sets of injectivity for the TSM.

\smallskip

A set $C$ in $\mathbb C^n~(n\geq 2)$ which satisfy $\lambda C\subseteq C,$
for all $\lambda\in\mathbb C$ is called a complex cone. In this article we
have proved the following result. Let $f$ be a continuous function on
$\mathbb C^n.$ Suppose $f\times\mu_r(z)=0,$ for all $r>0$ and $z\in C.$
Then $f\equiv0$ as as long as $C$ does not lay on the level surface of any bi-graded
homogeneous harmonic polynomial on $\mathbb C^n.$ We will call such cones as
\textbf{\em non-harmonic} cones. A proof of this result for functions in
$L^p(\mathbb C^n),~1\leq p\leq2,$ has been worked out using real analytic expansion
of the spectral projections, (see \cite{Sri3}). However, this proof does not stand
for the class of all continuous functions on $\mathbb C^n.$

\smallskip

We would like to assert that our main result of this article is in sharp contrast
(in terms of topological dimension) with the Euclidean results about the sets of injectivity
for the spherical means. Since a non-trivial complex cone in $\mathbb C^n~(n\geq2)$ can
have topological dimension at most $2n-2,$ therefore, it follows that a $(2n-2)-$ dimensional
entity is a set of injectivity for the TSM on $\mathbb C^n.$ All though, for the Euclidean
set up in $\mathbb R^{2d},$ the least topological dimension required (in general) for a set
to be set of injectivity for the spherical means is $2d-1.$ For instance, the boundary of a
bounded domain in $\mathbb R^{2d}$ is a set of injectivity for the spherical means for
$L^p(\mathbb R^{2d})$ with $1\leq p\leq\frac{4d}{2d-1}.$
(See \cite{ABK}). The main result of this article is also distinct to the known results
in terms of topological dimension of sets of injectivity for the TSM. The topological
dimension of sets of injectivity for the TSM on $\mathbb C^n$ for the known results is
$2n-1.$ For example, boundary of bounded domain and $(\mathbb R\cup i\mathbb R)\times\mathbb C^{n-1}$ are
sets of injectivity for the TSM in $\mathbb C^n$ having topological dimension $2n-1.$
 For more details see, \cite{AR, NT1, Sri}.

\smallskip

On the basis of our result that any non-harmonic complex cone is a set of injectivity
for the TSM, we can pose the following interesting question. A non-trivial complex
submanifold $S$ of $\mathbb C^n~(n\geq2)$ is a set injectivity for the TSM if and only if
$S$ does not contained in the zero set of any bi-graded homogeneous harmonic polynomial.
We leave this question open for the time being.

\smallskip

In order to complete the argument of this result, we show that the zero set of
polynomial $H(z)=az_1\bar z_2+|z|^2,$ where $a\not=0$ and $z\in\mathbb C^n$ is a
complex cone which does not contained in the zero set of any bi-graded homogeneous
harmonic polynomial. The proof of the fact that $H^{-1}(0)$ is a non-harmonic cone
is one of the most difficult pat of this article. We call a complex cone to be
{\em non-harmonic} if it does not contained in the zero set of any bi-graded
homogeneous harmonic polynomial on $\mathbb C^n$ of the form
\begin{equation}\label{exp11}
P(z)=\sum_{|\alpha|=p}\sum_{|\beta|=q}c_{\alpha\beta}z^\alpha\bar{z}^\beta,
\end{equation}
where $p,q\in\mathbb Z_+,$ the set of non-negative integers. Let $P_{p,q}$
denotes the space of all bi-graded homogeneous polynomials defined by (\ref{exp11}).
For $p\geq1,$ we observe that each of the diagonal space $P_{p,p}$ has at least one
member which corresponds to a non-harmonic complex cone. However, for $p, q\geq1,$
it is intersecting to know that whether the non-diagonal space $P_{p,q}$
has at least one member which corresponds to a non-harmonic complex cone.

\smallskip

The question of sets of injectivity for the spherical means has been taken up by many
authors in recent past. In an article by Agranovsky et al. \cite{ABK}, it has been proved
that the boundary of any bounded domain in $\mathbb R^d~(d\geq2)$ is set of injectivity for
the spherical means on $L^p(\mathbb R^d),$ with $1\leq p\leq\frac{2d}{d-1}.$

\smallskip

In general, characterization of sets of injectivity for spherical mean is a long
standing problem in integral geometry. In fact, an intuition about sets of injectivity
is that these sets are essentially those sets which are seating out side zero set of a
homogeneous harmonic polynomials union an algebraic variety of codimension at most $d-2.$
In 1996, Agranovsky and Quinto have given a characterization of sets of injectivity for
spherical mean for compactly supported function on $\mathbb R^2$ in terms of zero set of
a homogeneous harmonic polynomials union a finite set. They have conjectured this question
in higher dimension in similar way, (see \cite{AQ}). Following is their result.
\begin{theorem}\label{th7}\cite{AQ}
A set $S\subset\mathbb R^2$ is a set of injectivity for the spherical means for
$C_c(\mathbb R^2)$ if and only if $S\nsubseteq\omega(\Sigma_N)\cup F,$ where $\omega$
is a rigid motion of $\mathbb R^2, \Sigma_N=\cup_{l=0}^{N-1}\{te^{\frac{i\pi l}{N}}: t\in\mathbb R\}$
is a  Coxeter system of $N$ lines and $F$ is a finite set in $\mathbb R^2.$
\end{theorem}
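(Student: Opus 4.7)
My plan is to establish the biconditional by treating the two directions separately. The necessity part --- that every set of the form $\omega(\Sigma_N)\cup F$ fails to be a set of injectivity --- is relatively short: using rigid-motion invariance of $f\mapsto f\ast\mu_r$, I reduce to $\omega=\mathrm{id}$, then identify $\mathbb R^2$ with $\mathbb C$ so that $\Sigma_N$ is precisely the zero set of the homogeneous harmonic polynomial $P_N(z)=\mathrm{Im}(z^N)$. The key ingredient is a product formula, provable by Funk--Hecke or the theory of solid spherical harmonics: for any $g\in C_c((0,\infty))$, the function $f(z)=g(|z|)P_N(z)$ has spherical means of the form $f\ast\mu_r(x)=P_N(x)\,h_g(|x|,r)$, which vanish on $\Sigma_N$ for all $r>0$. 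To accommodate the finite set $F$, I would impose on $g$ finitely many linear moment conditions forcing $f\ast\mu_r$ to vanish on $F$ as well; these conditions cut out a finite-codimension subspace of the infinite-dimensional space $C_c((0,\infty))$, so a nonzero $g$ survives.

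For the sufficiency, assume $S\not\subseteq\omega(\Sigma_N)\cup F$ for every choice of rigid motion $\omega$, integer $N$, and finite $F$, and let $f\in C_c(\mathbb R^2)$ satisfy $f\ast\mu_r(x)=0$ for all $r>0$ and all $x\in S$. I would introduce the \emph{stationary set} $Z(f)=\{x\in\mathbb R^2:f\ast\mu_r(x)=0\text{ for all }r>0\}\supseteq S$ and show it has Coxeter-plus-finite form whenever $f$ is nonzero and compactly supported. The argument I envisage proceeds in three stages: first, expand $f$ into angular Fourier modes about each candidate center $x_0$, obtaining Volterra-type integral equations for each mode; second, using the compact support of $f$, apply unique continuation to force each angular mode to vanish at $x_0$ only on the zero set of a harmonic polynomial; third, exploit real-analyticity of $Z(f)$ together with the fact that zero sets of harmonic polynomials in $\mathbb R^2$ decompose as $\mathrm{Re}(cz^N)$, i.e.\ Coxeter systems, modulo finitely many stray points coming from lower-order terms.

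The principal obstacle is the rigidity assertion at the third stage: the stationary set of a nonzero compactly supported function in $\mathbb R^2$ must be a Coxeter system plus a finite exceptional set, and no other geometry is possible. This is where two-dimensionality is essential --- in higher dimensions, algebraic varieties of codimension at least two furnish additional contributions and the corresponding characterization becomes conjectural. I would expect the cleanest route to proceed via the Radon transform and Pompeiu-type uniqueness arguments, combined with a microlocal analysis of the wavefront set that pins down the exact harmonic polynomial whose zero set governs each angular mode.
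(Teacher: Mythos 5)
This statement is quoted verbatim from Agranovsky and Quinto \cite{AQ}; the paper offers no proof of it, so there is nothing in-paper to compare your argument against. Judged on its own terms, your proposal has genuine gaps in both directions.

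In the necessity direction, the ansatz $f(z)=g(|z|)\,\mathrm{Im}(z^N)$ does yield $f\ast\mu_r(x)=\mathrm{Im}(x^N)\,h_g(|x|,r)$ and hence vanishing means on $\Sigma_N$, but your treatment of the finite set $F$ breaks down. A point $x_0\in F\setminus\Sigma_N$ imposes $h_g(|x_0|,r)=0$ for \emph{every} $r>0$, which is a continuum of linear conditions, not finitely many, so the claim that $F$ cuts out a finite-codimension subspace of $C_c((0,\infty))$ is false. Worse, for this one-mode ansatz the function $r\mapsto h_g(\rho,r)$ is an Abel-type transform that essentially determines $g$, so requiring it to vanish identically at a single off-cone radius already forces $g=0$. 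One needs a richer ansatz (for instance sums $\sum_k g_k(|z|)\,\mathrm{Im}(z^{kN})$, or a construction on the Fourier--Bessel side) together with an actual existence argument for the finitely many centers.

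In the sufficiency direction, the assertion you defer to as ``the principal obstacle'' --- that the stationary set $Z(f)$ of every nonzero $f\in C_c(\mathbb R^2)$ has the form $\omega(\Sigma_N)\cup F$ --- is not a lemma to be invoked; it \emph{is} the theorem. Your three-stage outline names plausible tools (angular modes, unique continuation, real-analyticity) but supplies no argument for the two steps that carry all the difficulty: (i) why $Z(f)$ cannot contain any analytic arc other than a straight line, which in \cite{AQ} rests on a microlocal/wavefront analysis of solutions of the wave equation with compactly supported data, and (ii) why the lines occurring in $Z(f)$ must be concurrent and equally spaced in angle, which is a separate geometric argument about the group generated by the reflections that preserve $f$ up to sign. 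As written, the sufficiency half restates the conclusion rather than proving it.
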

Since, a homogeneous harmonic polynomial can be expressed as the product of homogeneous
polynomials of degree $1.$ Therefore, $\Sigma _N$ in Theorem \ref{th7} is nothing but
zero set of some homogeneous harmonic polynomial on $\mathbb R^2.$
Further studies on sets of injectivity for spherical mean and related problems have
been carried out in \cite{AK,AN,AQ,NRR,PS,V}.
Later, the question of sets of injectivity for the TSM has been considered by Agranovsky
and Rawat, (see \cite{AR}). They have shown that the boundary of any bounded domain in
$\mathbb C^n$ is set of injectvity for the TSM for a certain class of functions in
$L^p(\mathbb C^n).$ For more histories and further work on this question, we refer
\cite{AN1,AN2,NT1, Sri, Sri2, Sri3, ST, V1, V2, V3, V4,V5}.

\smallskip

A continuous function $f$ on $\mathbb R^d$  can be decomposed in terms of
spherical harmonics as
\begin{equation}\label{exp9}
f(x)=\sum_{k=0}^\infty a_{k,j}(\rho) Y_{k,j}(\omega),
\end{equation}
where $x=\rho\omega,~\rho=|x|,~\omega\in S^{n-1}$ and $\{Y_{kj}:
1,2,\ldots, d_k\}$ is an orthonormal basis for the space $V_k$ of the homogeneous
harmonic polynomials in $d$ variables of degree $k,$ restricted to the unit sphere
$S^{n-1}$ with the series in the right-hand side converges locally uniformly to $f.$
Let $H_k=\left\{P_k: P_k(x)=\rho^kY_k(\omega), Y_k\in V_k\right\}.$ The space $H_k$ is called
the space of solid spherical harmonics. For more details, see \cite{SW}.

\smallskip

Let $\nu_r$ be the normalized surface measure on the sphere $S_r(x)$ in $\mathbb R^d.$
For $f\in L^1_{\tiny\loc}(\mathbb R^d),$ we define its mean over the sphere $S_r(x)$ by
\[f\ast\nu_r(x)=\int_{S_r(x)}f(y)d\mu_r(y).\]

A set $C$ in $\mathbb R^d~(d\geq2)$ which satisfies $\lambda C\subseteq C,$
for all $\lambda\in\mathbb R$ is called a real cone.  Zalcman et al. \cite{AVZ}
have proved the following result.

\begin{theorem}\label{th3}\cite{AVZ}.
Let $C$ be a real cone in $\mathbb R^d~(d\geq2).$  Let $f$ be a continuous
function on $\mathbb R^d.$ Suppose $f\times\nu_r(x)=0,$ for all $r>0$ and
$x\in C.$ Then $f\equiv0$ if and only if $C\nsubseteq P^{-1}(0),$ for any
$P\in H_k$ and for all $k\in\mathbb Z_+.$
\end{theorem}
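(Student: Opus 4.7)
\medskip

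\noindent\textbf{Proof plan.}

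The plan is to treat the two implications separately. The ``only if'' direction is immediate by contraposition: if $C\subseteq P^{-1}(0)$ for some nonzero $P\in H_k$, then $P$ itself is a nonvanishing continuous function, and because $P$ is harmonic the mean value property gives $P\ast\nu_r(x)=P(x)=0$ for every $x\in C$ and $r>0$; hence $C$ fails to be a set of injectivity.

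For the ``if'' direction I would expand $f$ as in (\ref{exp9}),
$$f(\rho\omega)\;=\;\sum_{k\ge 0}\sum_{j=1}^{d_k}a_{k,j}(\rho)\,Y_{k,j}(\omega),$$
and use the rotation-invariance of $f\mapsto f\ast\nu_r$ to get the diagonal action
$$f\ast\nu_r(\rho\omega)\;=\;\sum_{k,j}\tilde a_{k,j}(\rho,r)\,Y_{k,j}(\omega),$$
where $\tilde a_{k,j}(\,\cdot\,,r)$ is a Funk--Hecke / Bessel-kernel transform of $a_{k,j}$. Writing $\widetilde C:=C\cap S^{d-1}$ for the angular section, the hypothesis rewrites as
$$\sum_{k,j}\tilde a_{k,j}(\rho,r)\,Y_{k,j}(\omega)\;=\;0,\qquad\rho,r>0,\ \omega\in\widetilde C,$$
while the non-harmonic condition on $C$ translates to: for each $k\ge 0$, no nonzero $Y\in V_k$ vanishes on $\widetilde C$. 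Once I obtain $\sum_j\tilde a_{k,j}(\rho,r)Y_{k,j}\big|_{\widetilde C}\equiv 0$ for each $k$, the non-harmonic hypothesis forces $\tilde a_{k,j}(\rho,r)\equiv 0$, and injectivity of the Hankel transform of order $k+(d-2)/2$ in $r$ then yields $a_{k,j}\equiv 0$, hence $f\equiv 0$.

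The main obstacle is precisely the degree-separation step, since the $Y_{k,j}$'s are orthogonal only over the full sphere, not over the possibly low-dimensional $\widetilde C$. My plan is to combine two structural inputs: (a) the cone property $\mathbb R\cdot\widetilde C\subseteq C$ together with the homogeneity of solid harmonics $\rho^k Y_{k,j}(\omega)=P_{k,j}(x)$, which supplies a $\rho$-degree grading that cannot be matched across different $k$; and (b) iterated $r$-differentiation via the Euler--Poisson--Darboux equation $u_{rr}+\frac{d-1}{r}u_r=\Delta_x u$ satisfied by $u(x,r)=f\ast\nu_r(x)$, which converts the single cone-vanishing hypothesis into an infinite sequence of conditions on $\Delta_x^m f$ along $C$. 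The grading then peels off one harmonic level at a time, and the non-harmonic hypothesis supplies the per-level injectivity that converts ``vanishes on $\widetilde C$'' into ``vanishes as an element of $V_k$''.
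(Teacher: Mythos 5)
The ``only if'' half of your argument is fine. For the ``if'' half you have correctly isolated the crux --- separating the harmonic degrees when the vanishing is only known on the thin angular section $\widetilde C=C\cap S^{d-1}$ --- but neither of the two mechanisms you propose actually closes that gap. Mechanism (a) rests on a homogeneity grading that is not there: $f$ is an arbitrary continuous function, so the coefficients $\tilde a_{k,j}(\rho,r)$ are Bessel-type transforms of arbitrary functions $a_{k,j}$ and carry no degree-$k$ behaviour in $\rho$; the identity $\rho^kY_{k,j}(\omega)=P_{k,j}(x)$ grades the harmonics themselves, not the expansion of $f\ast\nu_r$. Mechanism (b) has two problems: $f$ is only continuous, so $\Delta_x^m f$ does not exist classically, and you cannot mollify your way out because convolving with an approximate identity samples $f\ast\nu_r$ at points off the lower-dimensional set $C$, destroying the hypothesis; and even for smooth $f$ the iterated Darboux conditions read $\sum_{k,j} L_k^m\bigl[\tilde a_{k,j}\bigr](\rho,r)\,Y_{k,j}(\omega)=0$, where the radial Bessel operators $L_k$ act nontrivially on the coefficients, so there is no clean Vandermonde system in the eigenvalues $k(k+d-2)$ to invert.

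The device that actually performs the degree separation --- the one the paper points to in its remark following Theorem \ref{th3} and uses again in the proof of Theorem \ref{th1} --- is the reduction to geodesic-sphere averages. Because the whole ray $\mathbb R\omega$ lies in $C$, one integrates the hypothesis to get $\int_{B_r(x)}f=0$ and differentiates repeatedly along that ray to obtain $\int_{S_r(0)}P(y\cdot\omega)f(y)\,d\sigma(y)=0$ for every one-variable polynomial $P$ (a step that needs only continuity of $f$); approximating $\delta(y\cdot\omega-t)$ by Gaussians then yields $\tilde f_r(\omega,t)=0$ for all $t$, where $\tilde f_r(\omega,t)$ is the average of $f|_{rS^{d-1}}$ over the geodesic sphere with pole $\omega$ and latitude $t$. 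Lemma \ref{lemma3} --- in essence Funk--Hecke, $\widetilde{Y_k}(\omega,t)=c_k\,C_k^{(d-2)/2}(t)\,Y_k(\omega)$, combined with the linear independence of the Gegenbauer polynomials in $t$ --- then forces $Y_k(\omega)=0$ for every $k$ and every $\omega\in\widetilde C$, and the non-harmonicity hypothesis on $C$ finishes the argument sphere by sphere. You should replace steps (a) and (b) with this reduction.
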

An example of such a cone had been produced by Armitage, (see \cite{A}). Let $0<a<1$
and $C_k^\lambda(x)$ denotes Gegenbauer polynomial of degree $k$ and order $\lambda.$
Then $K_a=\left\{x\in\mathbb R^d:~|x_1|^2=a^2|x|^2\right\}$ is a non-harmonic cone if and only
if $D^m C_k^{\frac{d-2}{2}}(a)\neq0,$ for all $0\leq m\leq k-2,$ where $D^m$ denotes
the $m$th derivative.

\smallskip
We would like to mention that the proof of Theorem \ref{th3} is being deduced
by concentrating the problem to the unit sphere $S^{d-1}$ in terms of average on
its geodesic spheres. This is possible because cone $C$ is closed under scaling.
This lemma is also useful in the proof of our main result.

\smallskip
For $\omega\in S^{d-1}$ and $t\in(-1, 1),$ set  $S_\omega^t=\left\{v\in S^{d-1}: \omega\cdot v=t\right\}$
is a geodesic sphere on $S^{d-1}$ with pole at $\omega.$ Let $f$ be a continuous function
on $S^{d-1}.$ Define
\[\tilde f(\omega, t)=\int_{S_\omega^t}f d\nu_{d-2},\]
where $\nu_{n-2}$ is the normalized surface measure on the geodesic sphere $S_\omega^t.$

\begin{lemma}\label{lemma3}\cite{AVZ}
Suppose $f\in C(S^{d-1})$ have spherical harmonic expansion $\sum_{k=0}^\infty Y_k.$
Then $\tilde f(\omega, t)=0,~\forall~t\in (-1, 1)$
if and only if $Y_k(\omega)=0,~\forall k\in\mathbb Z_+.$ In particular,
if $\tilde f(\omega, t)=0,~\forall~t\in (-1, 1)$ then $f\equiv0$ if and
only if $\omega$ is not contained in the zero set of any homogeneous
harmonic polynomial.
\end{lemma}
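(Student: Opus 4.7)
The plan is to convert the geodesic-sphere averaging condition into an integral identity over the full sphere, evaluate the resulting integral via the Funk--Hecke formula, and exploit completeness of the Gegenbauer polynomials on $(-1,1)$ with respect to the weight $(1-t^2)^{(d-3)/2}$.

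The coarea formula gives, for any bounded $g$ on $(-1,1)$,
\[
\int_{-1}^1 \tilde f(\omega, t)\, g(t)\, (1-t^2)^{(d-3)/2}\, dt \;=\; c_d \int_{S^{d-1}} f(v)\, g(\omega\cdot v)\, d\sigma(v).
\]
Taking $g = C_k^{(d-2)/2}$ and invoking the Funk--Hecke identity
\[
\int_{S^{d-1}} Y_j(v)\, C_k^{(d-2)/2}(\omega\cdot v)\, d\sigma(v) \;=\; \delta_{jk}\, \lambda_k\, Y_k(\omega), \qquad Y_j\in V_j,
\]
with a computable nonzero constant $\lambda_k$, and then summing the $L^2$-convergent expansion $f=\sum_j Y_j$ term by term (legitimate because the left side is an $L^2$-bounded functional of $f$), one obtains
\[
\int_{S^{d-1}} f(v)\, C_k^{(d-2)/2}(\omega\cdot v)\, d\sigma(v) \;=\; \lambda_k\, Y_k(\omega).
\]

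Since $\tilde f(\omega,\cdot)$ is continuous in $t$, the condition $\tilde f(\omega, t) = 0$ for all $t \in (-1,1)$ is equivalent to its vanishing in the weighted $L^2$ sense, and hence, by the above displays and completeness of $\{C_k^{(d-2)/2}\}_{k\ge 0}$ in that space, equivalent to $Y_k(\omega) = 0$ for every $k\in\mathbb Z_+$. This proves the first biconditional. For the ``in particular'' clause, observe that if no non-trivial $P \in H_k$ vanishes at $\omega$, then $Y_k(\omega) = 0$ forces $Y_k \equiv 0$, so $f \equiv 0$; conversely, if some non-trivial $P \in H_k$ vanishes at $\omega$, then $f = P$ is non-zero yet satisfies $\tilde f(\omega, t) \equiv 0$, breaking the implication.

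The main obstacle I anticipate is pinning down the Funk--Hecke identity with the correct normalization and a non-vanishing eigenvalue $\lambda_k$, especially in the borderline dimension $d = 2$, where $C_k^0$ degenerates and has to be replaced by a Chebyshev polynomial $T_k$. Once that identity is secured, the rest of the argument is routine Hilbert-space reasoning.
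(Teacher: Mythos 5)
The paper does not actually prove this lemma; it is quoted verbatim from \cite{AVZ} with no argument supplied, so there is no internal proof to compare against. Judged on its own, your proposal is correct and is essentially the classical argument. The coarea identity $\int_{-1}^1 \tilde f(\omega,t)g(t)(1-t^2)^{(d-3)/2}\,dt = c_d\int_{S^{d-1}}f(v)g(\omega\cdot v)\,d\sigma(v)$ is right, Funk--Hecke with $g=C_k^{(d-2)/2}$ does isolate $\lambda_k Y_k(\omega)$ with $\lambda_k = c\,\|C_k^{(d-2)/2}\|_w^2/C_k^{(d-2)/2}(1)\neq 0$ by Gegenbauer orthogonality, the term-by-term summation is legitimate since the expansion converges in $L^2$ and the pairing is $L^2$-continuous, and completeness of polynomials in the weighted $L^2$ space gives the converse direction (using continuity of $t\mapsto\tilde f(\omega,t)$ to upgrade a.e.\ vanishing to everywhere). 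Your treatment of the ``in particular'' clause is also the correct reading: the counterexample $f=P|_{S^{d-1}}$ for a nontrivial $P\in H_k$ with $P(\omega)=0$ is exactly what breaks injectivity. The only remarks worth making: the route taken in \cite{AVZ} (and the shortest one) is the direct mean-value identity $\tilde Y_k(\omega,t)=Y_k(\omega)\,C_k^{(d-2)/2}(t)/C_k^{(d-2)/2}(1)$, of which your Funk--Hecke computation is the exact dual --- you test $\tilde f(\omega,\cdot)$ against the Gegenbauer system instead of expanding it in that system, so the two arguments buy the same thing at the same cost; and the $d=2$ degeneration you flag (Chebyshev in place of $C_k^0$) is genuine but harmless, and in any case the paper only ever applies the lemma with $d=2n\geq 4$.
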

The results on the sphere $S^{d-1}$ for geodesic mean need not be same as results for spherical mean
on $\mathbb R^d.$ For instance, Theorem \ref{th7} says that sets of injectivity for spherical mean are essentially
(up to a translation and rotation) sitting out side the zero set of a homogeneous harmonic polynomials union a finite set.

\section{Preliminaries}\label{section2}

We define the twisted convolution which arises in the study of
group convolution on Heisenberg group. The group $\mathbb H^n,$
as a manifold, is $\mathbb C^n \times\mathbb R$ with the group law
\[(z, t)(w, s)=\left(z+w,t+s+\frac{1}{2}\text{Im}(z.\bar{w})\right),~z,w\in\mathbb C^n\text{ and }t,s\in\mathbb R.\]
Let $\mu_s$ be the normalized surface measure on the sphere $ \{ (z,0):~|z|=s\} \subset \mathbb H^n.$
The spherical means of a function $f$ in $L^1(\mathbb H^n)$ are defined by
\begin{equation} \label{exp22}
f\ast\mu_s(z, t)=\int_{|w|=s}~f((z,t)(-w,0))~d\mu_s(w).
\end{equation}
Thus the spherical means can be thought of as convolution operators. An important technique in many
problem on $\mathbb H^n$ is to take partial Fourier transform in the $t$-variable to reduce matters
to $\mathbb C^n$. Let
\[f^\lambda(z)=\int_\mathbb R f(z,t)e^{i \lambda t} dt\]
be the inverse Fourier transform of $f$ in the $t$-variable. Then a simple calculation shows that

\begin{eqnarray*}
(f \ast \mu_s)^\lambda(z)&=&\int_{-\infty}^{~\infty}~f \ast \mu_s(z,t)e^{i\lambda t} dt\\
&=&\int_{|w| = s}~f^\lambda (z-w)e^{\frac{i\lambda}{2} \text{Im}(z.\bar{w})}~d\mu_s(w)\\
&=&f^\lambda\times_\lambda\mu_s(z),
\end{eqnarray*}
where $\mu_s$ is now being thought of as normalized surface measure
on the sphere $S_s(o)=\{z\in\mathbb C^n: |z|=s\}$ in $\mathbb C^n.$
Thus the spherical mean $f\ast \mu_s$ on the Heisenberg group can be
studied using the $\lambda$-twisted spherical mean $f^\lambda
\times_\lambda\mu_s$ on $\mathbb C^n.$ For $\lambda \neq 0,$
by scaling argument, it is enough to study the
twisted convolution for the case $\lambda=1.$

\smallskip

We need the following basic facts from the theory of bigraded
spherical harmonics, (see \cite{D, Gr1, T} for details). Let $K=U(n)$
and $M=U(n-1).$ Then, $S^{2n-1}\cong K/M$ under
the map $kM\rightarrow k.e_n,$ $k\in U(n)$ and $e_n=(0,\ldots
,1)\in \mathbb C^n.$ Let $\hat{K}_M$ denote the set of all
equivalence classes of irreducible unitary representations of $K$
which have a nonzero $M$-fixed vector.

For a $\delta\in\hat{K}_M,$ which is realized on $V_{\delta},$ let
$\{e_1,\ldots, e_{d(\delta)}\}$ be an orthonormal basis of
$V_{\delta}$ with $e_1$ as the $M$-fixed vector. Let
$t_{ij}^{\delta}(k)=\langle e_i,\delta (k)e_j \rangle ,$ $k\in K.$
By Peter-Weyl theorem, it follows that $\{\sqrt{d(\delta
)}t_{j1}^{\delta}:1\leq j\leq d(\delta ),\delta\in\hat{K}_M\}$ form an
orthonormal basis for $L^2(K/M)$ (see \cite{T}, p.14 for details).
Define $Y_j^{\delta} (\omega )=\sqrt{d(\delta )}t_{j1}^{\delta}(k),$
where $\omega =k.e_n\in S^{2n-1},$ $k \in K.$ Then
$\{Y_j^{\delta}:1\leq j\leq d(\delta ),\delta\in \hat{K}_M, \}$
becomes an orthonormal basis for $L^2(S^{2n-1}).$

For our purpose, we need a concrete realization of the
representations in $\hat{K}_M,$ which can be done in the following
way. See \cite{Ru}, p.253, for details.
For $p,q\in\mathbb Z_+$, let $H_{p,q}=\{P\in P_{p,q}:\Delta P=0\}.$
The group  $K$ acts on $H_{p,q}$ in a natural way. It is easy to see that the space
$H_{p,q}$ is $K$-invariant. Let $\pi_{p,q}$ denote the corresponding
representation of $K$ on $H_{p,q}.$ Then representations in
$\hat{K}_M$ can be identified, up to unitary equivalence, with the
collection $\{\pi_{p,q}: p,q \in \mathbb Z_+\}.$

Define the bi-graded spherical harmonic by $Y_j^{p,q}(\omega
)=\sqrt{d(p,q )}t_{j1}^{p,q}(k).$
Then $\{Y_j^{p,q}:1\leq j\leq d(p,q),p,q \in \mathbb Z_+ \}$ forms
an orthonormal basis for $L^2(S^{2n-1}).$ Therefore, a continuous
function $f$ on $S^{2n-1}$  can be expressed in terms of bi-graded
spherical harmonics as
\begin{equation}\label{Bexp4}
f (\omega) = \sum_{p,q\geq0} \sum_{j=1}^{d(p,q)} Y_j^{p,q}(\omega).
\end{equation}

\bigskip

For each $k,$ the space $V_k$  is invariant under the action of $SO(d).$
When $d = 2n,$ it is invariant under the the action of the unitary
group $U(n)$ as well, and under this action of $U(n)$ the space $V_k$
breaks up into an orthogonal direct sum of $H_{p,q}$'s where $p+q = k.$
(See \cite{Ru}, p. 255).

\begin{lemma}\label{lemma4}\cite{Ru}.
Let $\omega\in S^{2n-1}$ and $Y_k\in V_k.$ Then
\[Y_k(\omega)=\sum_{p+q=k}Y_{p,q}(\omega), \text{ where }Y_{p,q}\in H_{p,q}.\]
\end{lemma}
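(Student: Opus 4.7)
The plan is to establish the decomposition $V_k = \bigoplus_{p+q=k} H_{p,q}$ by exploiting the description of the Laplacian in complex coordinates and showing that it respects the bi-grading in a controlled way. First I would work with the identification $\mathbb{R}^{2n} \cong \mathbb{C}^n$ via $z_j = x_j + i y_j$ and write the standard Laplacian as
\[
\Delta = 4\sum_{j=1}^n \frac{\partial^2}{\partial z_j \partial \bar z_j}.
\]
Next, I would observe that the change of variables from $(x_1,\ldots,x_n,y_1,\ldots,y_n)$ to $(z_1,\ldots,z_n,\bar z_1,\ldots,\bar z_n)$ is $\mathbb{C}$-linear and invertible, so the space $P_k$ of real homogeneous polynomials of total degree $k$ decomposes as an (internal) direct sum
\[
P_k = \bigoplus_{p+q=k} P_{p,q},
\]
where $P_{p,q}$ is the space of polynomials of the form $\sum c_{\alpha\beta}\, z^\alpha \bar z^\beta$ with $|\alpha|=p$, $|\beta|=q$.

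The key step is then the bi-grading behaviour of $\Delta$. Since each operator $\partial_{z_j}\partial_{\bar z_j}$ lowers the $z$-degree by one and the $\bar z$-degree by one, we get $\Delta : P_{p,q} \to P_{p-1,q-1}$. Given any $Y_k \in V_k$, I would expand it according to the bi-graded decomposition as $Y_k = \sum_{p+q=k} Y_{p,q}$ with $Y_{p,q} \in P_{p,q}$, and then apply $\Delta$:
\[
0 \;=\; \Delta Y_k \;=\; \sum_{p+q=k} \Delta Y_{p,q}, \qquad \Delta Y_{p,q} \in P_{p-1,q-1}.
\]
Because the pairs $(p-1, q-1)$ are distinct as $(p,q)$ ranges over $p+q=k$, the summands lie in linearly independent subspaces of $P_{k-2}$. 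Hence each $\Delta Y_{p,q} = 0$, which means $Y_{p,q} \in H_{p,q}$, yielding the desired decomposition.

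The main obstacle, modest though it is, is the clean verification that $\Delta$ genuinely lowers the bi-degree by $(1,1)$ on $P_{p,q}$ and that the images land in distinct bi-graded summands of $P_{k-2}$; once this is in place, the rest is direct. A more conceptual alternative would be to invoke that $U(n) \subset SO(2n)$ preserves both $\Delta$ and the bi-grading, so $V_k$ decomposes as a $U(n)$-module into the irreducible pieces $H_{p,q}$ with $p+q=k$; but I would favour the elementary computational route above, since it avoids any appeal to representation theory of compact groups beyond what is already invoked in Section~\ref{section2} and yields the decomposition by a one-line argument from the bi-degree behaviour of $\Delta$.
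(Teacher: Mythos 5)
Your argument is correct, and it is a genuinely self-contained proof of a statement the paper does not prove at all: Lemma~\ref{lemma4} is simply quoted from Rudin (p.~255), and the surrounding text frames it representation-theoretically, as the decomposition of the $SO(2n)$-module $V_k$ into the irreducible $U(n)$-modules $H_{p,q}$ with $p+q=k$. Your elementary route --- write $P_k=\bigoplus_{p+q=k}P_{p,q}$, observe that $\Delta=4\sum_j\partial_{z_j}\partial_{\bar z_j}$ maps $P_{p,q}$ into $P_{p-1,q-1}$, and use the directness of the bigraded sum in degree $k-2$ (the pairs $(p-1,q-1)$ being distinct) to conclude that each bigraded component of a harmonic polynomial is itself harmonic --- is complete and avoids any appeal to the representation theory of $U(n)$. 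Two small points are worth making explicit if you write this up: first, the directness of $\bigoplus_{p+q=k}P_{p,q}$ rests on the linear independence of the monomials $z^\alpha\bar z^\beta$, which follows since $(z,\bar z)$ is an invertible linear change of the real coordinates $(x,y)$; second, when $p=0$ or $q=0$ the image space $P_{p-1,q-1}$ is zero, so those components are vacuously harmonic. Passing from the solid polynomials to the statement on $S^{2n-1}$ is immediate because restriction to the sphere is injective on homogeneous polynomials of a fixed degree. An alternative in the same spirit as the paper's own Lemma~\ref{lemma5} would be to separate the components via the circle action $z\mapsto e^{i\theta}z$, under which $Y_{p,q}$ transforms by $e^{i(p-q)\theta}$; but that only isolates the pieces, whereas your Laplacian bigrading argument is what shows each piece is harmonic, so your choice is the right one.
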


As a consequence to Lemma \ref{lemma4}, we prove the following lemma
which is required in the proof of our main result.
\begin{lemma}\label{lemma5}
Let $\Omega=\left\{\frac{z}{|z|}:~z\in C, ~ z\neq0\right\}.$ Then
$Y_k=0$ on $\Omega$ if and only if $Y_{p,q}=0$ on $\Omega,~\forall~
p, q\in\mathbb Z_+$ such that $p+q=k.$
\end{lemma}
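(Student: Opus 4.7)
The plan is to exploit the full complex-cone hypothesis on $C$, which is stronger than the real-cone hypothesis used in Lemma \ref{lemma3}. Since $\lambda C \subseteq C$ for every $\lambda \in \mathbb{C}$, the set $\Omega$ is closed under multiplication by unit complex numbers: for any $\omega = z/|z| \in \Omega$ and any $\theta \in \mathbb{R}$, the point $e^{i\theta}\omega = (e^{i\theta}z)/|e^{i\theta}z|$ again lies in $\Omega$. So $\Omega$ carries a natural $S^1$-action. This is the only feature of the complex cone that I would use in the proof.

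The ``if'' direction is immediate from Lemma \ref{lemma4}: if every $Y_{p,q}$ with $p+q=k$ vanishes on $\Omega$, then $Y_k = \sum_{p+q=k} Y_{p,q}$ vanishes on $\Omega$. So the content is the ``only if'' direction.

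For that direction, I would fix $\omega \in \Omega$ and use the bi-homogeneity of $Y_{p,q}$: since $Y_{p,q} \in H_{p,q}$ is a homogeneous polynomial of bidegree $(p,q)$ in $(z,\bar z)$, we have $Y_{p,q}(e^{i\theta}\omega) = e^{i(p-q)\theta}\, Y_{p,q}(\omega)$ for all $\theta \in \mathbb{R}$. Applying Lemma \ref{lemma4} at the point $e^{i\theta}\omega \in \Omega$ and using $Y_k = 0$ on $\Omega$, I get
\begin{equation*}
0 = Y_k(e^{i\theta}\omega) = \sum_{p+q=k} e^{i(p-q)\theta}\, Y_{p,q}(\omega) \qquad \text{for every } \theta \in \mathbb{R}.
\end{equation*}
As $(p,q)$ ranges over non-negative integers with $p+q=k$, the integer $p-q$ takes the $k+1$ distinct values $-k,-k+2,\ldots,k-2,k$. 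Hence the right-hand side is a trigonometric polynomial in $\theta$ with pairwise distinct frequencies, and Fourier uniqueness forces $Y_{p,q}(\omega) = 0$ for every such $(p,q)$. Since $\omega \in \Omega$ was arbitrary, this gives the claim.

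There is no real obstacle here; the only conceptual point is recognizing that the \emph{complex} (as opposed to real) scaling invariance of $C$ is exactly what is needed to separate the $U(1)$-weights of the $H_{p,q}$-components. The rest is simply reading off Fourier coefficients on the circle.
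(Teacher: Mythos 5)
Your proof is correct and follows essentially the same route as the paper: both arguments replace $\omega$ by $e^{i\theta}\omega$ (using the complex-scaling invariance of $C$), obtain $\sum_{p+q=k}e^{i(p-q)\theta}Y_{p,q}(\omega)=0$, and conclude from the linear independence of the characters $e^{i(p-q)\theta}$ with distinct exponents $p-q=2p-k$. Your write-up is in fact slightly more careful than the paper's, since it makes the bi-homogeneity identity $Y_{p,q}(e^{i\theta}\omega)=e^{i(p-q)\theta}Y_{p,q}(\omega)$ and the distinctness of the frequencies explicit.
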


\begin{proof}
Let $\omega\in\Omega$ and $Y_k(\omega)=0.$ Then by Lemma \ref{lemma4}, we have
\[\sum_{p+q=k}Y_{p,q}(\omega)=0.\]
Since the cone $C$ is closed under complex scaling, by replacing
$\omega$ for $e^{i\theta}\omega$ in the above equation, we get
\[\sum_{p+q=k}e^{i(p-q)\theta}Y_{p,q}(\omega)=0.\]
Using the fact that the set $\{e^{i\beta\theta}: ~\beta\in\mathbb Z\}$
form an orthogonal set and the sum vanishes on the diagonal $p+q=k,$
we conclude that $Y_{p,q}(\omega)=0,~\forall~p, q\in\mathbb Z_+$
such that $p+q=k.$
\end{proof}

We shall frequently need the following lemma which gives an unique
decomposition of homogeneous polynomial in terms of homogeneous harmonic
polynomials.

\begin{lemma}\label{lemma2}
Let $P\in P_{p,q}.$ Then $P(z)=P_0(z)+|z|^2P_1(z)+\cdots+|z|^{2l}P_l(z),$ where
$P_j\in H_{p-j, q-j};~j=1,2,\ldots, l$ and $l\leq \min(p,q).$
\end{lemma}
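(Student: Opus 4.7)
The plan is to prove the decomposition by induction on $\min(p,q)$, after establishing the key orthogonal splitting
\[P_{p,q} = H_{p,q} \oplus |z|^2 \, P_{p-1,q-1}.\]
Once this is in hand, writing $P = P_0 + |z|^2 Q$ with $P_0 \in H_{p,q}$ and $Q \in P_{p-1,q-1}$ and then applying the same splitting to $Q$ repeatedly yields the claimed form, the iteration stopping when one of the indices hits zero (at which point $P_{p-l,0}$ or $P_{0,q-l}$ consists entirely of holomorphic or antiholomorphic polynomials and is automatically harmonic since $\Delta = 4\sum_j \partial_{z_j}\partial_{\bar z_j}$). Thus $l \leq \min(p,q)$.

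To establish the splitting, I would use the Fischer inner product on $P_{p,q}$. For monomials, set $\langle z^\alpha \bar z^\beta,\, z^{\alpha'} \bar z^{\beta'}\rangle = \alpha!\,\beta!\,\delta_{\alpha\alpha'}\delta_{\beta\beta'}$, and extend sesquilinearly. A direct computation on monomials shows that multiplication by $|z|^2 = \sum_j z_j \bar z_j$, viewed as a linear map $P_{p-1,q-1} \to P_{p,q}$, has as its adjoint precisely the Laplacian $\tfrac14 \Delta = \sum_j \partial_{z_j}\partial_{\bar z_j} : P_{p,q} \to P_{p-1,q-1}$. Moreover, multiplication by $|z|^2$ is injective on $P_{p-1,q-1}$ (it is a nonzero polynomial multiplier on a polynomial ring, which is an integral domain). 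Hence in the finite-dimensional space $P_{p,q}$ one has
\[P_{p,q} = |z|^2\, P_{p-1,q-1} \;\oplus\; \bigl(|z|^2 P_{p-1,q-1}\bigr)^{\perp}
= |z|^2\, P_{p-1,q-1} \;\oplus\; \ker\bigl(\Delta\big|_{P_{p,q}}\bigr),\]
and the second summand is exactly $H_{p,q}$ by definition.

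Once the splitting $P_{p,q} = H_{p,q} \oplus |z|^2 P_{p-1,q-1}$ is proved, the induction is immediate: write $P = P_0 + |z|^2 R_1$ with $P_0 \in H_{p,q}$ and $R_1 \in P_{p-1,q-1}$, then decompose $R_1 = P_1 + |z|^2 R_2$ with $P_1 \in H_{p-1,q-1}$ and $R_2 \in P_{p-2,q-2}$, and so on, giving $P = P_0 + |z|^2 P_1 + \cdots + |z|^{2l} P_l$ with $P_j \in H_{p-j,q-j}$. The process terminates once $\min(p-l,q-l) = 0$, which forces $l \leq \min(p,q)$ and guarantees the final term is itself harmonic.

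The main obstacle, and the only nontrivial point, is verifying that multiplication by $|z|^2$ and the Bochner Laplacian $\tfrac14\Delta$ are adjoint with respect to the Fischer pairing; everything else is formal. This computation reduces to checking $\langle z_j \bar z_j z^\alpha \bar z^\beta, z^{\alpha'}\bar z^{\beta'}\rangle = \langle z^\alpha \bar z^\beta, \partial_{z_j}\partial_{\bar z_j} z^{\alpha'}\bar z^{\beta'}\rangle$ on monomials, which follows from the identity $(\alpha_j+1)!\,(\beta_j+1)! = \alpha_j!\,\beta_j! \cdot (\alpha_j+1)(\beta_j+1)$.
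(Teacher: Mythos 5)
Your proof is correct. Note that the paper does not actually prove Lemma \ref{lemma2} --- it only cites Thangavelu's book (p.~66) --- and your argument (the Fischer pairing under which multiplication by $|z|^2$ and $\tfrac14\Delta$ are mutually adjoint, giving $P_{p,q}=H_{p,q}\oplus|z|^2P_{p-1,q-1}$, then iterating until an index reaches zero) is essentially the standard proof found in that reference; it is also consistent with the inner product $\langle R,S\rangle=R(\partial)\bar S$ that the paper itself introduces before Lemma \ref{lemma6}. A minor remark: the injectivity of multiplication by $|z|^2$ is not needed for the splitting itself (the identity $P_{p,q}=\operatorname{im}T\oplus\ker T^{*}$ holds regardless), but it is what makes the resulting decomposition unique, a fact the paper relies on later.
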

For a proof of this lemma, see \cite{T}, p. 66.

\smallskip

We shall also require the next two lemmas in the proof of  existence of non-harmonic
complex cones in $\mathbb C^n.$ Using the fact that Laplacian is rotation invariant,
we prove the following lemma.
\begin{lemma}\label{lemma1}
Let $0\leq j\leq\min(p, q)$ and $R_j\in H_{p-j, q-j}.$ Then
\[\Delta\left(|z|^{2j}R_j\right)=c_j|z|^{2j-2}R_j,\text{ where } c_j=4j(n+p+q-j-1).\]
\end{lemma}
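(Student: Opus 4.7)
The plan is to compute $\Delta(|z|^{2j}R_j)$ directly, using the expression of the Euclidean Laplacian on $\mathbb{C}^n\cong\mathbb{R}^{2n}$ in complex coordinates,
\[
\Delta=4\sum_{k=1}^{n}\frac{\partial^2}{\partial z_k\,\partial\bar z_k},
\]
together with the Leibniz rule and the bi-homogeneity and harmonicity of $R_j$.

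First, I would handle the pure piece $\Delta(|z|^{2j})$. Writing $|z|^{2j}=(z\cdot\bar z)^j$ and differentiating gives $\partial_{z_k}|z|^{2j}=j|z|^{2j-2}\bar z_k$ and $\partial_{\bar z_k}\partial_{z_k}|z|^{2j}=j(j-1)|z|^{2j-4}|z_k|^2+j|z|^{2j-2}$; summing over $k$ yields $\Delta(|z|^{2j})=4j(n+j-1)|z|^{2j-2}$.

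Next, I would expand $\Delta(|z|^{2j}R_j)$ by the Leibniz rule. Since $R_j$ is harmonic, the term $|z|^{2j}\Delta R_j$ vanishes, and the remaining cross terms are
\[
4\sum_{k=1}^{n}(\partial_{z_k}|z|^{2j})(\partial_{\bar z_k}R_j)+4\sum_{k=1}^{n}(\partial_{\bar z_k}|z|^{2j})(\partial_{z_k}R_j).
\]
Substituting the first-order derivatives above and invoking Euler's identity in the two bi-homogeneous forms $\sum_k\bar z_k\,\partial_{\bar z_k}R_j=(q-j)R_j$ and $\sum_k z_k\,\partial_{z_k}R_j=(p-j)R_j$, both cross terms collapse to multiples of $|z|^{2j-2}R_j$, contributing together $4j(p+q-2j)|z|^{2j-2}R_j$. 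Adding the pure piece $4j(n+j-1)|z|^{2j-2}R_j$ gives $c_j=4j(n+j-1)+4j(p+q-2j)=4j(n+p+q-j-1)$, as required.

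The main obstacle is essentially bookkeeping: one must distinguish the two cross terms, which produce \emph{distinct} Euler weights $p-j$ and $q-j$ rather than a single combined factor, and keep track of the factor of $4$ arising in the complex form of $\Delta$. The rotation-invariance remark preceding the lemma in fact points to a conceptual shortcut: since $\Delta$ commutes with the $U(n)$-action on $\mathbb{C}^n$ and the space $H_{p-j,q-j}$ is $U(n)$-irreducible, the assignment $R_j\mapsto\Delta(|z|^{2j}R_j)$ must be a scalar multiple of the embedding $R_j\mapsto|z|^{2j-2}R_j$, so the constant $c_j$ depends only on $(j,p,q,n)$ and can be pinned down by testing on a single convenient element of $H_{p-j,q-j}$ such as $z_1^{p-j}\bar z_2^{q-j}$.
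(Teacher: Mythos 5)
Your computation is correct and follows essentially the same route as the paper: expand $\Delta=4\sum_k\partial_{z_k}\partial_{\bar z_k}$ by the Leibniz rule, kill the $|z|^{2j}\Delta R_j$ term by harmonicity, and apply Euler's identity separately in the holomorphic and antiholomorphic variables to get the weights $p-j$ and $q-j$. The closing remark about pinning down $c_j$ by Schur's lemma is a nice aside but not needed; the paper does the same direct bookkeeping you do.
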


\begin{proof}
Consider
\begin{equation}\label{exp9A}
  \Delta\left(|z|^{2j}R_j\right)=4\sum_{k=1}^n\frac{\partial^2}{\partial z_k\partial\bar z_k}\left(|z|^{2j}R_j\right).
\end{equation}
We have
\begin{eqnarray*}
% \nonumber to remove numbering (before each equation)
\frac{\partial^2}{\partial z_k\partial\bar z_k}\left(|z|^{2j}R_j\right)
&=&\frac{\partial}{\partial z_k}\left\{j|z|^{2(j-1)}z_kR_j+ |z|^{2j}\frac{\partial R_j}{\partial\bar z_k}\right\}\\
&=&j\left\{(j-1)|z|^{2(j-2)}|z_k|^2R_j+ |z|^{2(j-1)}R_j+|z|^{2(j-1)}z_k\frac{\partial R_j}{\partial z_k}\right\}\\
&+& j(|z|^2)^{j-1}\bar z_k\frac{\partial R_j}{\partial\bar z_k}+|z|^{2j}\frac{\partial^2R_j}{\partial z_k\partial\bar z_k}.\\
\end{eqnarray*}
By using Euler's formula for homogeneous function, we get
\[\sum z_k\frac{\partial R_j}{\partial z_k}=(p-j)R_j \text{ and }
\sum\bar z_k\frac{\partial R_j}{\partial\bar z_k}=(q-j)R_j.\]
From Equation (\ref{exp9A}), it follows that
\[\Delta\left(|z|^{2j}R_j\right)=4j(n+p+q-j-1)|z|^{2j-2}R_j.\]
\end{proof}

For multi-indexes $\alpha, \beta\in\mathbb Z_+^n,$ write $\partial^\alpha=\partial_1^{\alpha_1}\cdots\partial_n^{\alpha_n}$
and $\bar\partial^\beta=\bar\partial_1^{\beta_1}\cdots\bar\partial_n^{\beta_n}.$ Then for $R\in P_{p,q},$
we can write
\[R(\partial)=\sum_{|\alpha|=p}\sum_{|\beta|=q}c_{\alpha\beta}\partial^\alpha\bar\partial^\beta.\]
For $R, S\in P_{p,q},$ define an inner product on  $P_{p,q}$ by
$\langle  R, S\rangle= R(\partial)\bar S.$ We shall prove the following lemma which is crucial
for proof of the fact that complex cone $H^{-1}(0)$ is a non-harmonic cone in $\mathbb C^n.$
\begin{lemma}\label{lemma6}
For $z_1, z_2\in\mathbb C,$ define $\mathcal A=\bar z_2\dfrac{\partial}{\partial \bar z_1}+z_1\dfrac{\partial}{\partial z_2}.$
Then $\mathcal A$ is a self-adjoint operator on the space $P_{p,q}.$
\end{lemma}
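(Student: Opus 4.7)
The plan is to read off the self-adjointness of $\mathcal{A}$ directly from the Fischer-type duality packaged into the inner product $\langle R,S\rangle = R(\partial)\bar S$. Writing $\partial_j=\partial/\partial z_j$ and $\bar\partial_j=\partial/\partial\bar z_j$ for brevity, I will show that the two summands $\bar z_2\bar\partial_1$ and $z_1\partial_2$ of $\mathcal{A}$ are adjoints of one another, so that the adjoint simply exchanges them and leaves $\mathcal{A}$ invariant.

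The first step is to evaluate the inner product on the monomial basis of $P_{p,q}$. With $R=z^\alpha\bar z^\beta$ and $S=z^\gamma\bar z^\delta$, one has $\bar S=z^\delta\bar z^\gamma$, so a termwise differentiation yields
\[
\langle z^\alpha \bar z^\beta,\, z^\gamma \bar z^\delta\rangle \;=\; \alpha!\,\beta!\,\delta_{\alpha,\delta}\,\delta_{\beta,\gamma}.
\]
Note the indices pair ``crosswise''---$z^\alpha\bar z^\beta$ is orthogonal to every monomial except its conjugate partner $z^\beta\bar z^\alpha$. From this formula I extract the two adjunction relations
\[
\langle z_j f,\,g\rangle \;=\; \langle f,\,\bar\partial_j g\rangle,
\qquad
\langle \bar z_j f,\,g\rangle \;=\; \langle f,\,\partial_j g\rangle,
\]
each of which is a one-line monomial check. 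Equivalently, the multiplication operator $M_{z_j}$ has adjoint $\bar\partial_j$ and $M_{\bar z_j}$ has adjoint $\partial_j$; taking adjoints again, $\partial_j^{\ast}=M_{\bar z_j}$ and $\bar\partial_j^{\ast}=M_{z_j}$.

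The second step is purely mechanical. Writing $\mathcal{A} = M_{\bar z_2}\bar\partial_1 + M_{z_1}\partial_2$ and using $(AB)^{\ast}=B^{\ast}A^{\ast}$,
\[
(M_{\bar z_2}\bar\partial_1)^{\ast}
\;=\; \bar\partial_1^{\ast}\, M_{\bar z_2}^{\ast}
\;=\; M_{z_1}\,\partial_2
\;=\; z_1\,\partial_2,
\]
and symmetrically
\[
(M_{z_1}\partial_2)^{\ast}
\;=\; \partial_2^{\ast}\, M_{z_1}^{\ast}
\;=\; M_{\bar z_2}\,\bar\partial_1
\;=\; \bar z_2\,\bar\partial_1.
\]
Summing gives $\mathcal{A}^{\ast} = z_1\partial_2 + \bar z_2\bar\partial_1 = \mathcal{A}$, which is the desired self-adjointness.

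The only delicate ingredient is pinning down the correct pairing: in this Fischer inner product $z_j$ is adjoint to $\bar\partial_j$ (and not to $\partial_j$, as the more familiar Fock-space intuition for holomorphic polynomials would suggest). Once this is correctly identified, the equality $\mathcal{A}^{\ast}=\mathcal{A}$ is just the observation that adjunction interchanges the two summands of $\mathcal{A}$.
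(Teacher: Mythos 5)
Your proof is correct and follows essentially the same route as the paper's: both arguments amount to showing that the two summands $\bar z_2\bar\partial_1$ and $z_1\partial_2$ of $\mathcal A$ are adjoints of one another with respect to the Fischer pairing $\langle R,S\rangle=R(\partial)\bar S$ --- the paper by manipulating the defining formula directly via $\partial_{z_1}\bar S=\overline{\partial_{\bar z_1}S}$, you by first recording the monomial orthogonality relations and the elementary adjunctions $M_{z_j}^{\ast}=\bar\partial_j$, $M_{\bar z_j}^{\ast}=\partial_j$ and then composing. Your explicit Gram computation, with its ``crosswise'' pairing of $z^\alpha\bar z^\beta$ against $z^\beta\bar z^\alpha$, makes transparent the conjugation convention that the paper's proof uses only implicitly, and is if anything the cleaner presentation.
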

\begin{proof}
Consider
\[\left\langle  z_1\frac{\partial R}{\partial z_2}, S\right\rangle
=\left(\frac{\partial R}{\partial z_2}\right)(\partial)\frac{\partial}{\partial z_1}\bar S
=\left(\frac{\partial R}{\partial z_2}\right)(\partial) \frac{\overline{\partial S}}{\partial \bar z_1}.\]
That is,
\[\left\langle  z_1\frac{\partial R}{\partial z_2}, S\right\rangle
=\left\langle \frac{\partial R}{\partial z_2}, \frac{\partial S}{\partial \bar z_1}\right\rangle=
\overline{\left(\frac{\partial S}{\partial \bar z_1}\right)(\partial) \overline{\frac{\partial R}{\partial z_2}}}.\]
This implies,
\[\left\langle  z_1\frac{\partial R}{\partial z_2}, S\right\rangle=\overline{\left(\frac{\partial (\bar z_2S)}{\partial \bar z_1}\right)(\partial) \bar R}
=\left\langle R, \frac{\partial(\bar z_2S)}{\partial \bar z_1}\right\rangle.\]
Hence,
\[\left\langle  z_1\frac{\partial R}{\partial z_2}, S\right\rangle=\left\langle R, \bar z_2\frac{\partial S}{\partial \bar z_1}\right\rangle.\]

Similarly, we can obtain the equation
\[\left\langle  \bar z_2\frac{\partial R}{\partial\bar z_1}, S\right\rangle
=\left\langle R, z_1\frac{\partial S}{\partial z_2}\right\rangle.\]
Thus by combining both these conditions,  we get $\langle\mathcal A R, S\rangle=\langle R, \mathcal A S\rangle.$
That is, operator $\mathcal A$ is self-adjoint.
\end{proof}

\bigskip

We also need an expansion of functions on $\mathbb C^n$ in terms of Laguerre
functions $\varphi_k^{n-1}$'s, which is know as special Hermite expansion.
The special Hermite expansion is a useful tool in the study of convolution
operators and is related to the spectral theory of sub-Laplacian on the
Heisenberg group $H^n$. However, more details can be found in \cite{T}.

\smallskip

For $\lambda\in\mathbb R^*=\mathbb R\setminus\{0\}$, let
$\pi_\lambda$ be the unitary representation of $H^n$ on $L^2(\mathbb
R^n)$ given by
$$\pi_\lambda(z,t)\varphi(\xi)=e^{i\lambda t}e^{i\lambda(x.\xi+\frac{1}{2}x.y)}\varphi(\xi+y),
\varphi\in L^2(\mathbb R^n).$$ A celebrated theorem of Stone and von Neumann
says that $\pi_\lambda$ is  irreducible and up to unitary equivalence
$\{\pi_\lambda:~\lambda\in\mathbb R\}$ are all the infinite dimensional unitary
irreducible representations of $H^n.$
Let \[T=\dfrac{\partial}{\partial t},X_j=\dfrac{\partial}{\partial
x_j}+\dfrac{1}{2}y_j\dfrac{\partial}{\partial t}~,
Y_j=\dfrac{\partial}{\partial
y_j}-\dfrac{1}{2}x_j\dfrac{\partial}{\partial t},~j=1,2,\ldots,n.\]
Then  $\{T,X_j,Y_j:~j=1,\ldots,n\}$ is a  basis for the  Lie Algebra
$\mathfrak h^n$ of all left invariant vector fields on $H^n.$
Define $\mathcal L=-\sum_{j=1}^n(X_j^2+Y_j^2),$ the second order
differential operator which is known as the sub-Laplacian of $H^n.$
The representation $\pi_\lambda$ induces a representation
$\pi_\lambda^*$ of $\mathfrak h^n,$ on the space of $C^\infty$
vectors in $L^2(\mathbb R^n)$ is defined by
\[\pi_\lambda^*(X)f=\left.\frac{d}{dt}\right\vert_{t=0}\pi_\lambda(\exp tX)f.\]
An easy calculation shows that
$\pi^*(X_j)=i\lambda x_j,~\pi^*(Y_j)=\dfrac{\partial}{\partial x_j},~j=1,2,\ldots,n$.
Therefore, $\pi_\lambda^*(\mathcal
L)=-\Delta_x+\lambda^2|x|^2=:H(\lambda),$ the scaled Hermite
operator. The eigenfunction of $H(\lambda)$ are given by
$\phi_\alpha^\lambda(x)=|\lambda|^{\frac{n}{4}}\phi_\alpha(\sqrt{|\lambda|} x), ~\alpha\in\mathbb Z_+^n,$
where $\phi_\alpha$ are the Hermite functions on $\mathbb R^n.$
Since
$H(\lambda)\phi_\alpha^\lambda=(2|\lambda|+n)|\lambda|\phi_\alpha^\lambda.$
Therefore,
\[\mathcal L\left(\pi_\lambda(z,t)\phi_\alpha^\lambda,\phi_\beta^\lambda\right)
=(2|\lambda|+n)|\lambda|\left(\pi_\lambda(z,t)\phi_\alpha^\lambda,\phi_\beta^\lambda\right).\]
Thus the entry functions
$\left(\pi_\lambda(z,t)\phi_\alpha^\lambda,\phi_\beta^\lambda\right),~
 \alpha,\beta\in\mathbb Z_+^n$ are eigenfunctions for $\mathcal L$.
As
$\left(\pi_\lambda(z,t)\phi_\alpha^\lambda,\phi_\beta^\lambda\right)
 =e^{i\lambda t}\left(\pi_\lambda(z)\phi_\alpha^\lambda,\phi_\beta^\lambda\right),$
these eigenfunctions are not in $L^2(H^n).$ However for a fix $t,$
they are in $L^2(\mathbb C^n).$ Define  $\mathcal L_\lambda$ by
$\mathcal L\left(e^{i\lambda t}f(z)\right)=e^{i\lambda t}L_\lambda
f(z).$ Then the functions
\[\phi_{\alpha\beta}^\lambda(z)=
(2\pi)^{-\frac{n}{2}}\left(\pi_\lambda(z)\phi_\alpha^\lambda,\phi_\beta^\lambda\right),\]
are eigenfunction of the operator $L_\lambda$ with eigenvalue $2|\lambda|+n.$
The functions $\phi^\lambda_{\alpha\beta}$'s are called the special Hermite
functions and they form an orthonormal basis
for $L^2(\mathbb C^n)$ (see \cite{T}, Theorem 2.3.1, p.54). Thus,
for $g\in L^2(\mathbb C^n)$, we have the expansion
\[g=\sum_{\alpha,\beta}\left\langle g,\phi^\lambda_{\alpha\beta}\right\rangle\phi^\lambda_{\alpha\beta}.\]
To further simplify this expansion, let
$\varphi^{n-1}_{k,\lambda}(z)=\varphi^{n-1}_k(\sqrt{|{\lambda|}}z),$
the Leguerre function of degree $k$ and order $n-1$. The special
Hermite functions $\phi^\lambda_{\alpha\alpha}$ satisfy the relation
\begin{equation}\label{ACexp4}
\sum_{|\alpha|=k}\phi^\lambda_{\alpha,\alpha}(z)=(2\pi)^{-\frac{n}{2}}
|\lambda|^{\frac{n}{2}}\varphi^{n-1}_{k,\lambda}(z).
\end{equation}
Let $g$ be a function in $L^2(\mathbb C^n)$. Then $g$ can be expressed as
\[g(z)=(2\pi)^{-n}|\lambda|^n\sum_{k=0}^\infty g\times_\lambda\varphi_{k,\lambda}^{n-1}(z),\]
whenever $\lambda\in\mathbb R^*,$ (see \cite{T}, p.58). In particular, for $\lambda=1$, we have
\begin{equation}\label{ACexp16}
g(z)=(2\pi)^{-n}\sum_{k=0}^\infty g\times\varphi_k^{n-1}(z),
\end{equation}
which is called the special Hermite expansion for $g$.
For radial functions, this expansion further simplifies as can be seen from the following lemma.
\begin{lemma} \label{lemma1C}\emph{\cite{T}}
Let  $f$ be a radial function in $L^2(\mathbb C^n)$. Then
\[f=\sum_{k=0}^\infty B^n_k\left\langle f,\varphi^{n-1}_k\right\rangle\varphi^{n-1}_k,
~\text{where}~B^n_k=\frac{k!(n-1)!}{(n+k-1)!}.\]
\end{lemma}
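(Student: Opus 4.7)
The plan is to specialize the general special Hermite expansion (\ref{ACexp16}) to radial $f$, show that each summand $f\times\varphi_k^{n-1}$ collapses to a scalar multiple of $\varphi_k^{n-1}$, and then determine the scalar by $L^2$ orthogonality.

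First I would verify that twisted convolution with the $U(n)$-invariant kernel $\varphi_k^{n-1}$ commutes with the $U(n)$ action on $\mathbb C^n$: because $\text{Im}(Uz\cdot\overline{Uw})=\text{Im}(z\cdot\bar w)$ for $U\in U(n)$, a change of variables in the defining integral gives $(f\circ U)\times\varphi_k^{n-1}=(f\times\varphi_k^{n-1})\circ U$. Hence $f\times\varphi_k^{n-1}$ is radial whenever $f$ is. Next, by (\ref{ACexp4}) and the fact that each $\phi_{\alpha\alpha}^1$ with $|\alpha|=k$ lies in the $(2k+n)$-eigenspace of $L_1$, the function $\varphi_k^{n-1}$ is an $L_1$-eigenfunction with eigenvalue $2k+n$, and this eigenvalue is preserved under twisted convolution with $\varphi_k^{n-1}$.

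I would then argue that the radial part of the $(2k+n)$-eigenspace of $L_1$ in $L^2(\mathbb C^n)$ is one-dimensional and spanned by $\varphi_k^{n-1}$ itself. Equivalently, $\{\varphi_k^{n-1}\}_{k\geq 0}$ is an orthogonal basis of the radial subspace of $L^2(\mathbb C^n)$, which is the classical completeness of Laguerre functions on $[0,\infty)$ with the weight $r^{2n-1}dr$ coming from polar coordinates on $\mathbb C^n$. Combined with the previous step, this forces $f\times\varphi_k^{n-1}=c_k\varphi_k^{n-1}$ for some scalar $c_k$ depending linearly on $f$.

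Finally I would compute $c_k$ by pairing both sides with $\varphi_k^{n-1}$ in $L^2(\mathbb C^n)$, using the orthogonality of distinct $\varphi_k^{n-1}$'s together with the norm $\|\varphi_k^{n-1}\|_2^2=(2\pi)^n\binom{n+k-1}{n-1}=(2\pi)^n/B_k^n$; both facts are read off from (\ref{ACexp4}) and the orthonormality of $\{\phi_{\alpha\alpha}^1\}$, since $|\{\alpha\in\mathbb Z_+^n:|\alpha|=k\}|=\binom{n+k-1}{n-1}$. Substituting the resulting expression for $c_k$ back into (\ref{ACexp16}) and simplifying the normalization constants yields the claimed Laguerre expansion. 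The main obstacle is the completeness of the Laguerre functions on the radial subspace, which is the one ingredient not already visible in the preliminaries; once it is granted, the rest is bookkeeping of constants through (\ref{ACexp4}).
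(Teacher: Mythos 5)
The paper does not actually prove this lemma --- it is quoted verbatim from Thangavelu \cite{T} --- so there is no in-paper argument to compare against; your sketch is essentially the standard textbook derivation and its skeleton is sound (the $U(n)$-equivariance of $\times$, the eigenfunction property $L_1(f\times\varphi_k^{n-1})=f\times L_1\varphi_k^{n-1}=(2k+n)f\times\varphi_k^{n-1}$, and the reduction to one-dimensionality of the radial part of each eigenspace are all correct). One structural remark: once you grant completeness of the Laguerre system in $L^2([0,\infty),r^{2n-1}dr)$ --- which, as you note, is the only external input --- the lemma already follows from the orthogonal expansion of $f$ in the radial subspace together with your norm computation; the detour through twisted convolution and eigenspaces is correct but redundant for this particular statement.

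The one place you should not wave your hands is the final bookkeeping, because the constants do not close up as claimed. From (\ref{ACexp4}) you correctly get $\|\varphi_k^{n-1}\|_2^2=(2\pi)^n\binom{n+k-1}{n-1}=(2\pi)^n/B_k^n$ with respect to Lebesgue measure, and then the orthogonal expansion reads
\[
f=\sum_{k\geq0}\frac{\left\langle f,\varphi_k^{n-1}\right\rangle}{\|\varphi_k^{n-1}\|_2^2}\,\varphi_k^{n-1}
=(2\pi)^{-n}\sum_{k\geq0}B_k^n\left\langle f,\varphi_k^{n-1}\right\rangle\varphi_k^{n-1},
\]
which differs from the stated identity by a factor $(2\pi)^{-n}$; you get the same residual factor if you instead determine $c_k$ by pairing $f=(2\pi)^{-n}\sum_k c_k\varphi_k^{n-1}$ against $\varphi_k^{n-1}$. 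So ``simplifying the normalization constants'' cannot produce the formula as printed unless $\left\langle\cdot,\cdot\right\rangle$ is taken with respect to the normalized measure $(2\pi)^{-n}dz$. That is in fact the convention under which the paper's later identities $\varphi_j^{n-1}\times\varphi_k^{n-1}=(2\pi)^n\delta_{jk}\varphi_k^{n-1}$ and $f\times\varphi_k^{n-1}=(2\pi)^nB_k^n\left\langle f,\varphi_k^{n-1}\right\rangle\varphi_k^{n-1}$ become mutually consistent (test both on $f=\varphi_k^{n-1}$). Either state explicitly which inner product you are using or carry the $(2\pi)^{-n}$ through; with that fixed, your argument is complete.
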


\section{Proofs of the main results}\label{section3}
In this section, we first prove our main result that a
non-harmonic complex cone is a set of injectivity for the twisted
spherical means for the class of all continuous functions on $\mathbb C^n.$
After that we prove the existence of examples of non-harmonic complex cone.
In fact, we show that each of the diagonal space $P_{p,p}$ has at least one
member which corresponds to a non-harmonic complex cone. At the end we mention
some remarks and open problems related to the problem of sets of injectivity for the TSM.

\begin{theorem}\label{th1}
Let $C$ be a complex cone in $\mathbb C^n~(n\geq 2).$  Let $f$ be a continuous
function on $\mathbb C^n.$ Suppose $f\times\mu_r(z)=0,$ for all $r>0$ and
$z\in C.$ Then $f\equiv0$ if and only if $C\nsubseteq H^{-1}(0),$ for any
$P\in H_{p,q}$ and for all $p, q\in\mathbb Z_+.$
\end{theorem}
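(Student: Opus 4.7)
The theorem has two directions.

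\emph{Only if (counterexample).} Assume $C \subseteq P^{-1}(0)$ for some nonzero $P \in H_{p,q}$. I would take $f(z) = P(z)\varphi(|z|)$ with $\varphi$ a suitably chosen radial profile (for example a Gaussian, or one of the Laguerre functions $\varphi_k^{n-1}$ from Lemma~\ref{lemma1C}). The crucial algebraic fact --- a direct consequence of the $U(n)$-equivariance of twisted convolution, the irreducibility of $H_{p,q}$ as a $U(n)$-module, and Schur's lemma --- is that for any radial $h$,
\[
(P\cdot h(|\cdot|))\times\mu_r(z) = P(z)\,(T_{p,q}^{r}h)(|z|),
\]
for some linear operator $T_{p,q}^{r}$ on radial functions. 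Hence $f\times\mu_r$ is of the form $P(z)\cdot(\text{radial})$ and vanishes wherever $P$ does, so in particular on $C$. A generic $\varphi$ keeps $f\not\equiv 0$.

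\emph{If (sufficiency).} Assume $C$ is non-harmonic and $f\in C(\mathbb{C}^n)$ satisfies $f\times\mu_r(z)=0$ on $C\times(0,\infty)$. Set
\[
\Omega=\{z/|z|:z\in C,\ z\neq 0\}\subseteq S^{2n-1},
\]
which is invariant under $\omega\mapsto e^{i\theta}\omega$ since $C$ is a complex cone. For each $\rho>0$, expand the sphere-restriction $F_\rho(\omega):=f(\rho\omega)$ using the bi-graded basis \eqref{Bexp4}:
\[
F_\rho(\omega) = \sum_{p,q,j}a_{p,q,j}(\rho)\,Y_j^{p,q}(\omega).
\]
I aim to show that for every $(p,q)$ and every $\rho>0$, the piece $\sum_j a_{p,q,j}(\rho)Y_j^{p,q}$ vanishes on $\Omega$. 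Once this is known, extending by homogeneity produces a bi-graded harmonic polynomial in $H_{p,q}$ vanishing on $C$; since $C$ is non-harmonic, this polynomial is identically zero, and linear independence of $\{Y_j^{p,q}\}$ on $S^{2n-1}$ forces $a_{p,q,j}(\rho)=0$ for all $p,q,j$. As $\rho$ is arbitrary, $f\equiv 0$.

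To carry out the extraction, I would first integrate the hypothesis against compactly supported continuous weights $g(r)$ to obtain $f\times G(z)=0$ for all $z\in C$, where $G(w)=g(|w|)/(c_{2n-1}|w|^{2n-1})$ is radial on $\mathbb{C}^n$. Varying $g$ gives $f\times G=0$ on $C$ for all $G$ in a dense family of radial test kernels. Applying the intertwining property from the ``only if'' part term-by-term to the bi-graded expansion of $f$ yields
\[
\sum_{p,q,j}\alpha_{p,q,j}(\rho,G)\,Y_j^{p,q}(\omega)=0 \qquad(\omega\in\Omega,\ \rho>0),
\]
where $\alpha_{p,q,j}(\rho,G) = \rho^{p+q}\,(T_{p,q,G}h_{p,q,j})(\rho)$ with $h_{p,q,j}(\rho)=a_{p,q,j}(\rho)/\rho^{p+q}$. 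Selecting $G$ from the Laguerre family of Lemma~\ref{lemma1C} --- which diagonalizes radial twisted convolutions within each bi-degree sector --- and then invoking Lemma~\ref{lemma3} for the resulting sphere-functions, one deduces the vanishing on $\Omega$ of every total-degree-$k$ spherical-harmonic component of $F_\rho$; Lemma~\ref{lemma5} upgrades this to bi-graded vanishing on $\Omega$, completing the argument.

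\emph{Main obstacle.} The substantive difficulty is this extraction step. The $L^p$ argument of \cite{Sri3} uses a real-analytic expansion of spectral projections, which does not converge for merely continuous $f$. The plan here is to use the Laguerre family only as a dense collection of radial test kernels (so no spectral expansion of $f$ itself is required), combined with the classical sphere-average criterion (Lemma~\ref{lemma3}) applied term-by-term. Controlling the cross-coupling between different bi-degrees under this procedure, and rigorously justifying every term-by-term manipulation purely under continuity, is the most delicate point.
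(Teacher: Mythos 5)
Your ``only if'' direction is fine in outline: the Hecke--Bochner identity for twisted convolution (essentially Lemma \ref{lemma3C} of the paper) does produce a nonzero $f=P\varphi_{k}^{n+p+q-1}$ with $f\times\mu_r=c_k(r)f$, hence vanishing on $C\subseteq P^{-1}(0)$. The genuine gap is in the sufficiency direction. Your plan is to expand $f$ on each sphere in bi-graded harmonics and push radial twisted convolutions through that expansion term by term, using the Laguerre functions as a dense family of radial test kernels. This is precisely the spectral-projection route of \cite{Sri3}, and the paper states explicitly (and correctly) that it does not go through for merely continuous $f$: the components $a_{p,q,j}(\rho)Y_j^{p,q}$ of a general continuous function have no summability in $(p,q)$, the expansion converges only in $L^2$ of each sphere, and the twisted convolution couples different radii $\rho$, so none of the term-by-term interchanges you need are available. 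You flag this as ``the most delicate point'' but supply no idea that overcomes it. Moreover, even granting the interchange, your identity $\sum\alpha_{p,q,j}(\rho,G)Y_j^{p,q}(\omega)=0$ holds only for $\omega\in\Omega$, where the $Y_j^{p,q}$ are not orthogonal, so the bi-degrees cannot be separated from it; and you never actually produce the geodesic-sphere averages $\tilde f(\omega,t)$ that Lemma \ref{lemma3} takes as input.

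The paper's mechanism is entirely different and is the missing ingredient. One first integrates the hypothesis in $r$ to get $\int_{|w|\le r}f(z+w)e^{-\frac{i}{2}\mathrm{Im}(z\cdot\bar w)}\,dw=0$ for $z\in C$; since a complex cone contains whole complex lines, one may apply $\partial_{z_1}$ and $\partial_{\bar z_1}$ along such a line, and Green's theorem converts the result into a first-order ODE in $r$ for $r^{2n-1}(\bar z_1 f)\times\mu_r(z)$ whose only solution vanishing as $r\to0$ is zero. Iterating gives $(Pf)\times\mu_r(z)=0$ for every polynomial $P(z_1,\bar z_1)$. Evaluating at $z=0$, where the twist factor is trivial, yields $\int_{rS^{2n-1}}P(u_1)f\,d\mu_r=0$, and a Gaussian approximation of the delta function in the $u_1$-variable then produces exactly the geodesic-sphere integrals required by Lemma \ref{lemma3}, with poles running over $\Omega$ after rotation. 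Only at that stage do Lemma \ref{lemma3}, Lemma \ref{lemma5} and the non-harmonicity of $C$ enter, in the order you anticipated. Without this differentiation/Green's-theorem step your argument cannot be completed for continuous $f$.
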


\begin{proof}
Since $C$ is closed under complex scaling, by rotation we can assume that
$z=(z_1,0,\ldots,0)\in C,$ for all $z_1\in\mathbb C.$ By the hypothesis
$f\times\mu_s(z)=0,~\forall s>0$ and  for all $z\in C.$ Therefore, we can write
\begin{equation}\label{exp1c}
\int_{|w|\leq r}f(z+w)e^{-\frac{i}{2}\text{Im}(z.\bar{w})}dw=
\int_{0}^rf\times\mu_s(z)s^{2n-1}dw=0,
\end{equation}
for all $r>0$ and $z\in C.$ Let $z_1=x_1+iy_1.$ Applying
$2\partial_{z_1}=2\frac{\partial}{\partial z_1}=\frac{\partial}{\partial x_1}-i\frac{\partial}{\partial y_1}$
to the above equation, we get
\[\int_{|w|\leq r}\frac{\partial}{\partial w_1}\left(f(z+w)e^{-\frac{i}{2}\text{Im}(z.\bar{w})}\right)dw
-\frac{1}{2}\int_{|w|\leq r}\bar w_1f(z+w)e^{-\frac{i}{2}\text{Im}(z.\bar{w})}dw=0,\]
for all $r>0$ and $z$ in $C.$ By Green's theorem, we get
\[\int_{|w|=r}\frac{\bar w_1}{r}\left(f(z+w)e^{-\frac{i}{2}\text{Im}(z.\bar{w})}\right)dw
=\frac{1}{2}\int_{|w|\leq r}\bar w_1f(z+w)e^{-\frac{i}{2}\text{Im}(z.\bar{w})}dw,\]
Let $g(z)=\bar z_1f(z).$ Then we have
\[r^{2n-2}g\times\mu_r(z)=\frac{1}{2}\int_{0}^rg\times\mu_s(z)s^{2n-1}ds.\]
Put $F(t)=t^{2n-1}g\times\mu_t(z).$ Then the above equation becomes
\[\frac{F(r)}{r}=\frac{1}{2}\int_{0}^rF(s)ds.\]
By differentiating both sides, we get
\[F'(r)=\left(\frac{r}{2}+\frac{1}{r}\right)F(r).\]
A general solution to this equation is
\[F(r)=\frac{c(z)}{r}e^{\frac{r^2}{4}}.\]
That is,
\[r^{2n-2}g\times\mu_r(z)=c(z)e^{\frac{r^2}{4}}.\]
By letting $r\rightarrow0,$ we get $c(z)=0.$ Hence  $g\times\mu_r(z)=0,$ for all $r>0$ and
$z\in C.$ Let $h(z)=z_1f(z).$ Similarly, by applying
$2\partial_{\bar z_1}=2\frac{\partial}{\partial\bar z_1}=\frac{\partial}{\partial x_1}+i\frac{\partial}{\partial y_1}$
to the equation (\ref{exp1c}), we get $h\times\mu_r(z)=0,$ for all $r>0$ and
$z\in C.$ Hence for any polynomial $P(z_1,\bar z_1),$ we deduce that
$(Pf)\times\mu_r(z)=0,~\forall ~r>0$ and $z\in C.$
Let $w_j=u_j+iv_j, ~j=1,2, \ldots,n.$ Since $0\in C,$ by evaluating
the means  at $0,$ we get
\[\int_{rS^{2n-1}}P(u_1)f(w)d\mu_r(w)=0,\] for all $r>0.$  Thus, we can approximate the
above equation at $u_1=t.$ For $\epsilon>0,$ we can write
\[\int_{rS^{2n-1}}\frac{1}{\epsilon~\sqrt\pi}~e^{-\frac{(u_1-t)^2}{\epsilon^2}}f(w)d\mu_r(w)=0.\]
Letting $\epsilon\rightarrow0,$ we get
\[\int_{rS^{2n-1}\cup\{u_1=t\}}fd\tilde\mu=0,\]
for all $t\in (-r, r),$ where $\tilde\mu$ is the normalized surface measure on the
geodesic sphere $rS^{2n-1}\cup\{u_1=t\}.$ Thus, the integral of $f$ vanishes over
all $(2n-2)$- dimensional geodesic spheres on $rS^{2n-1}$ with poles lay on
$rS^{2n-1}.$ In view of Lemma \ref{lemma3}, we infer that $f$ vanishes
on $rS^{2n-1}$ if and only if poles are not contained in $Y_k^{-1}(0),~\forall~k\in\mathbb Z_+.$
Further by Lemma \ref{lemma4}, we have \[Y_k=\sum_{p+q=k}Y_{p,q}.\] Hence, in view of Lemma \ref{lemma5},
we get $f=0$ on $rS^{2n-1}$ if and only if poles are not contained in
$Y_{p,q}^{-1}(0),~\forall~p, q\in\mathbb Z_+.$ Since $r>0$ is arbitrary and $f$ is continuous,
we conclude that $f\equiv0$ on $\mathbb C^n.$ This completes the proof.
\end{proof}

\begin{remark}
In Theorem \ref{th1}, we have shown that complex cone is set of injectivity
for the twisted spherical means, however the question of the real cones to be
sets of injectivity for the twisted spherical means is still unsolved.
For instance, the author in the article \cite{Sri2} has shown that the real
cone $\mathbb R\cup i\mathbb R\times\mathbb C^{n-1}$
is a set of injectivity for the TSM for the class $L^p(\mathbb C^n)~(n\geq2)$
with $1\leq p\leq2.$  This real cone is not contained in the zero set of
any bi-graded homogeneous harmonic polynomial. We would like to mention that
the later result is a consequence of a result that $\mathbb R\cup i\mathbb R$
is set of injectivity for the TSM for $L^p(\mathbb C),$ which has been proved
by the author in the article \cite{Sri}.
\end{remark}

In order to complete the argument of Theorem \ref{th1}, we now prove that
there exists a non-trivial complex cone which does not vanish on the zero set (or level surface)
of any bi-graed homogeneous harmonic polynomial.

Let $0\neq a\in\mathbb C$ and $z\in\mathbb C^n,~(n\geq2).$ Write
$H(z)=az_1\bar z_2+|z|^2.$ Then $H^{-1}(0)$ is a complex cone. In fact, we shall show
that this is a non-harmonic complex cone. Since $H$ is a homogeneous polynomial, for $n=2,$
put $z_1=wz_2.$ Then $H(wz_2, z_2)=|z_2|^2(w\bar w+aw+1).$ Since the polynomial
$w\bar w+aw+1$ can not be factorized into linear factors, therefore, $H$ is irreducible.
It is clear from the context that $H$ is also irreducible for $n>2.$ Now, it only
remains to show that $H^{-1}(0)\nsubseteq R^{-1}(0),$ for any $R\in H_{s,t}$ and for
all $s,t\in\mathbb Z_+.$ Otherwise, if $H^{-1}(0)\subseteq R^{-1}(0),$ then by using
the fact that $H$ is irreducible, it  implies that $H$ divides $R$ and hence $R=HQ,$
for some $Q\in P_{p,q}.$ Thus, it is enough to prove the following result in order
to prove our claim.

\begin{theorem}\label{th2}
Suppose $\Delta(HQ)=0,$ for some $Q\in P_{p,q}$ with $p,q\in\mathbb Z_+.$
Then $Q$ has to vanish identically.
\end{theorem}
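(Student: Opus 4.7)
First, I would compute $\Delta(HQ)$ to reduce the hypothesis to an identity on $Q$ itself. Using the product rules
\[
\Delta(|z|^2 R) = 4(n+p+q) R + |z|^2 \Delta R, \qquad \Delta(z_1\bar z_2 R) = 4\mathcal A R + z_1\bar z_2 \Delta R,
\]
valid for $R \in P_{p,q}$ (the first is the $j=1$ case of Lemma \ref{lemma1}), together with the splitting $H = az_1\bar z_2 + |z|^2$, one arrives at
\[
\Delta(HQ) = 4(n+p+q) Q + 4a\,\mathcal A Q + H \Delta Q,
\]
with $\mathcal A$ the operator of Lemma \ref{lemma6}. The hypothesis $\Delta(HQ)=0$ thus becomes
\[
(n+p+q) Q + a\,\mathcal A Q + \tfrac{1}{4} H \Delta Q = 0.
\]

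The next observation is that $\mathcal A = z_1\partial_{z_2} + \bar z_2\partial_{\bar z_1}$ is the sum of two commuting nilpotent operators on $P_{p,q}$: $z_1\partial_{z_2}$ is nilpotent of order at most $p+1$, and $\bar z_2\partial_{\bar z_1}$ of order at most $q+1$. Since the summands commute, $\mathcal A$ itself is nilpotent on $P_{p,q}$, and consequently $cI + a\mathcal A$ is invertible on $P_{p,q}$ for every nonzero scalar $c$. This invertibility is the main solvability ingredient.

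I would then argue by induction on $\min(p,q)$. The base case $\min(p,q)=0$ is immediate: $P_{p,q}=H_{p,q}$, so $\Delta Q=0$ and the displayed identity reduces to $[(n+p+q)I + a\mathcal A]Q = 0$, forcing $Q=0$. For the inductive step, apply Lemma \ref{lemma2} to write $Q = \sum_{j=0}^{l} |z|^{2j} Q_j$ with $Q_j \in H_{p-j,q-j}$ and $l=\min(p,q)\ge 1$. Compute $\Delta Q$ from Lemma \ref{lemma1} and use the identity $\mathcal A(|z|^{2j} Q_j) = 2j\,z_1\bar z_2\,|z|^{2(j-1)} Q_j + |z|^{2j}\,\mathcal A Q_j$ to expand $\mathcal A Q$; substituting into the key identity and collecting by powers of $|z|^2$ reduces it to $\sum_{j=0}^{l} |z|^{2j} B_j = 0$, where
\[
B_j = (j+1)(n+p+q-j)\, Q_j + a\,\mathcal A Q_j + a(j+1)(n+p+q-j)\, z_1\bar z_2\, Q_{j+1} \in P_{p-j,q-j}.
\]
Expanding each $B_j$ further via Lemma \ref{lemma2} and invoking uniqueness of the harmonic decomposition converts this into a coupled triangular system of identities in the harmonic subspaces $H_{p-k,q-k}$, $k=0,\ldots,l$.

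At the top level $k=l$ the situation is partly simplified: since $\min(p-l,q-l)=0$, the space $P_{p-l,q-l}$ is already entirely harmonic, so $B_l = [(l+1)(n+p+q-l)I + a\mathcal A]Q_l$ sits in $H_{p-l,q-l}$. The plan is then to resolve the coupled system by downward iteration: use Lemma \ref{lemma6} (self-adjointness of $\mathcal A$) to pair off the off-diagonal harmonic projections of $\mathcal A Q_j$ and $z_1\bar z_2 Q_{j+1}$ against the already-harmonic top terms, and combine with the invertibility of $cI + a\mathcal A$ to peel off the $Q_j$'s one at a time. The main obstacle I anticipate is precisely this bookkeeping: at intermediate levels the harmonic projections spread across several subspaces $H_{p-k,q-k}$, and a careful joint use of the self-adjoint pairing in Lemma \ref{lemma6} with the uniqueness of the Lemma \ref{lemma2} decomposition is required to reduce each step to an equation of the form $[c_j I + a\mathcal A] Q_j = 0$ with $c_j\ne 0$, which then yields $Q_j=0$ by nilpotency.
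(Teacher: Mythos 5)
Your reduction of the hypothesis to the identity $(n+p+q)Q+a\mathcal AQ+\tfrac14 H\Delta Q=0$ is correct, and so is your formula for the blocks $B_j$ (I checked both). Your observation that $\mathcal A=z_1\partial_{z_2}+\bar z_2\partial_{\bar z_1}$ is nilpotent on $P_{p,q}$, being a sum of two commuting nilpotents, is also correct, and it is a genuinely different --- and in fact more robust --- ingredient than the paper's route through Lemma \ref{lemma6}: nilpotency gives invertibility of $cI+a\mathcal A$ for every $c\neq0$ and every $a\neq 0$ at once, with no need for the rotation trick the paper uses to handle real $a$, and it sidesteps the delicate point that the pairing $\langle R,S\rangle=R(\partial)\bar S$ of Lemma \ref{lemma6} is not a positive-definite inner product on $P_{p,q}$ (a nonzero nilpotent operator cannot be self-adjoint with respect to a genuine inner product, so ``self-adjoint hence real eigenvalues'' is not actually available). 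With nilpotency in hand, your base case $\min(p,q)=0$, and more generally the case of harmonic $Q$, are fully done.

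The gap is the inductive step, and it is not mere bookkeeping. From $\sum_j|z|^{2j}B_j=0$ you cannot read off $B_j=0$, because the $B_j$ are not harmonic; re-expanding each $B_j=\sum_i|z|^{2i}B_{j,i}$ with $B_{j,i}\in H_{p-j-i,q-j-i}$ yields only the coupled relations $\sum_{j+i=k}B_{j,i}=0$ for each $k$, and resolving these requires the explicit harmonic decompositions of $\mathcal AQ_j$ and of $z_1\bar z_2Q_{j+1}$, which you never compute; your ``downward iteration'' is offered as a plan and you yourself flag the obstacle. This missing computation is exactly the content of the paper's Lemma \ref{lemma8}: there one decomposes $HQ$ (rather than $Q$) as $az_1\bar z_2Q_0+\sum_{j\geq1}|z|^{2j}R_j$ with $R_j$ harmonic and applies $\Delta$ repeatedly; Lemma \ref{lemma1} strips one power of $|z|^2$ at each stage, the identities $\Delta(z_1\bar z_2Q_0)=4\mathcal AQ_0$ and $\Delta(\mathcal AQ_0)=8\mathcal BQ_0$ with $\mathcal BQ_0$ harmonic turn the final equation into a genuine harmonic decomposition, and uniqueness then forces $R_j=0$ for $j\geq3$ and determines $R_1,R_2$ in terms of $Q_0$, reducing the theorem to the harmonic case. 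Until that triangularization (or an equivalent one) is actually carried out, your proof is incomplete; once it is, your nilpotency argument finishes the harmonic case cleanly and, I would say, more convincingly than the eigenvalue argument in the paper.
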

In order to prove Theorem \ref{th2}, we need the following lemma. Write
\[\mathcal A=\bar z_2\frac{\partial}{\partial \bar z_1}+z_1\frac{\partial}{\partial z_2}
\text{ and } {\mathcal B}=\frac{\partial^2}{\partial \bar z_1\partial z_2}.\]

\begin{lemma}\label{lemma8}
Let $\Delta(HQ)=0,$ for some $Q\in P_{p,q}.$ Then
\begin{equation}\label{exp8}
HQ=a\left(z_1\bar z_2Q_0+ \gamma|z|^2\mathcal AQ_0 +\delta|z|^4{\mathcal B} Q_0\right),
\end{equation}
where $\gamma$ and $\delta$ are non-zero constants and independent of $a.$
\end{lemma}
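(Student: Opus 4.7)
My plan is to pin down $HQ$ by computing the harmonic component of its Lemma \ref{lemma2} decomposition: since $\Delta(HQ) = 0$ by hypothesis, $HQ$ coincides with its own harmonic component. Writing $Q = Q_0 + |z|^2 Q'$ via Lemma \ref{lemma2} with $Q_0 \in H_{p,q}$, I observe that
\[
HQ \;=\; a z_1 \bar z_2 Q_0 \;+\; \bigl( a z_1 \bar z_2 |z|^2 Q' + |z|^2 Q_0 + |z|^4 Q'\bigr),
\]
and every term in the parentheses is divisible by $|z|^2$, so by uniqueness of the decomposition in Lemma \ref{lemma2} it contributes nothing to the harmonic part. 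Hence the problem reduces to computing the harmonic component of $z_1 \bar z_2 Q_0 \in P_{p+1,q+1}$.

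The extraction is driven by two Laplacian identities. A direct Leibniz calculation (using $\Delta(z_1 \bar z_2) = 0$ and harmonicity of $Q_0$) gives $\Delta(z_1 \bar z_2 Q_0) = 4 \mathcal A Q_0$, while the commutator identity $[\Delta, \mathcal A] = 8 \mathcal B$ together with $\Delta Q_0 = 0$ yields $\Delta(\mathcal A Q_0) = 8 \mathcal B Q_0$; moreover $\mathcal B Q_0 \in H_{p-1, q-1}$ since $\mathcal B$ is a constant-coefficient operator commuting with $\Delta$. Applying Lemma \ref{lemma2} to $\mathcal A Q_0 \in P_{p,q}$ and matching both sides of its Laplacian via Lemma \ref{lemma1} forces the two-term expansion
\[
\mathcal A Q_0 \;=\; H_0^{(0)} + \frac{2}{N-2}\, |z|^2 \mathcal B Q_0, \qquad N := n+p+q, \quad H_0^{(0)} \in H_{p,q}.
\]

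Next, decompose $z_1 \bar z_2 Q_0 = K_0 + |z|^2 K_1 + |z|^4 K_2 + \cdots$ via Lemma \ref{lemma2} with $K_j \in H_{p+1-j, q+1-j}$. Applying Lemma \ref{lemma1} to $\Delta(z_1 \bar z_2 Q_0) = 4\mathcal A Q_0$ and matching against the previous display forces $K_1 = \frac{1}{N} H_0^{(0)}$, $K_2 = \frac{1}{(N-1)(N-2)} \mathcal B Q_0$, and $K_j = 0$ for $j \geq 3$. Solving for $K_0$ and eliminating $H_0^{(0)}$ through the displayed expansion of $\mathcal A Q_0$ then gives
\[
K_0 \;=\; z_1 \bar z_2 Q_0 - \frac{1}{N} |z|^2 \mathcal A Q_0 + \frac{1}{N(N-1)} |z|^4 \mathcal B Q_0,
\]
so that $HQ = a K_0$ yields (\ref{exp8}) with $\gamma = -\frac{1}{n+p+q}$ and $\delta = \frac{1}{(n+p+q)(n+p+q-1)}$, both non-zero and independent of $a$. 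The main obstacle I anticipate is the nested bookkeeping of two harmonic decompositions (first of $\mathcal A Q_0$, then of $z_1 \bar z_2 Q_0$) while tracking the $|z|^{2j}$ levels; the conceptual input that makes the expansion close at the $|z|^4$ level is the commutator relation $[\Delta, \mathcal A] = 8 \mathcal B$.
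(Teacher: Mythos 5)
Your proof is correct and follows essentially the same route as the paper's: both rest on the unique decomposition of Lemma \ref{lemma2}, the eigenvalue identity of Lemma \ref{lemma1}, and the two Laplacian computations $\Delta(z_1\bar z_2 Q_0)=4\mathcal A Q_0$ and $\Delta(\mathcal A Q_0)=8\mathcal B Q_0$, with orthogonality/uniqueness forcing the expansion to terminate at the $|z|^4$ level. The only difference is organizational: you first reduce to the harmonic projection of $z_1\bar z_2 Q_0$ (a reduction the paper makes explicit only afterwards, in the proof of Theorem \ref{th2}) and as a bonus obtain the explicit values $\gamma=-\tfrac{1}{n+p+q}$ and $\delta=\tfrac{1}{(n+p+q)(n+p+q-1)}$, which the paper leaves unspecified.
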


\begin{proof}
Since $Q\in P_{p,q},$ therefore by Lemma \ref{lemma2}, $Q$ can be uniquely decomposed in terms of
spherical harmonics as
\[Q=Q_0+|z|^2Q_1+|z|^4Q_2+\cdots+|z|^{2l}Q_l,\]
where $Q_j\in H_{p-j, q-j}.$
We can write
\begin{eqnarray*}
% \nonumber to remove numbering (before each equation)
HQ &=& az_1\bar z_2\left(Q_0+|z|^2Q_1+|z|^4Q_2+\cdots+|z|^{2l}Q_l\right)+ |z|^2Q \\
&=& az_1\bar z_2Q_0+ |z|^2\left(az_1\bar z_2Q_1+|z|^2az_1\bar z_2Q_2+\cdots+|z|^{2l-2}az_1\bar z_2Q_l+ |z|^2Q\right).
\end{eqnarray*}
Using Lemma \ref{lemma2} once again to the above equation, we get
\begin{equation}\label{exp5}
HQ=az_1\bar z_2Q_0+ |z|^2R_1+|z|^4R_2+\cdots+|z|^{2m}R_m,
\end{equation}
where $R_j\in H_{p-j, q-j}.$  Now, using the condition that $HQ$ is harmonic, we
show that $HQ$ is completely determined by $Q_0,$ which is the key part of this proof.
In fact, the later argument would enable us to assume that $Q$ is harmonic.

Since $\Delta(HQ)=0.$ Therefore, by Lemma \ref{lemma1}, it follows that
\begin{equation}\label{exp1}
\Delta (HQ)=a\Delta\left(z_1\bar z_2Q_0\right)+ c_1R_1+c_2|z|^2R_2+\cdots+c_m|z|^{2m-2}R_m=0.
\end{equation}
We have
\begin{eqnarray}\label{exp10}
\Delta\left(z_1\bar z_2Q_0\right)
&=&4\left[\frac{\partial^2(z_1\bar z_2Q_0)}{\partial z_1\partial\bar z_1} +
\frac{\partial^2(z_1\bar z_2Q_0)}{\partial z_2\partial\bar z_2}+
\sum_{k=3}^n\frac{\partial^2(z_1\bar z_2Q_0)}{\partial z_k\partial\bar z_k}\right] \nonumber\\
&=&4\bar z_2\frac{\partial Q_0}{\partial \bar z_1}+4z_1\frac{\partial Q_0}{\partial z_2}=4\mathcal AQ_0. \nonumber\\
\end{eqnarray}
By equation (\ref{exp1}), we have
\[a\mathcal AQ_0+ c_1R_1+c_2|z|^2R_2+\cdots+c_m|z|^{2m-2}R_m=0.\]
By applying $\Delta$ in the above equation, we get
\begin{equation}\label{exp3}
a\Delta\left(\mathcal AQ_0\right)+c_2c_2'R_2+c_3c_3'|z|^2R_3\cdots+c_mc_m'|z|^{2m-4}R_m=0.
\end{equation}
A straightforward calculation gives
\begin{equation}\label{exp2}
\Delta\left(\mathcal AQ_0\right)=4\mathcal A\Delta Q_0+8{\mathcal B} Q_0= 8{\mathcal B} Q_0,
\end{equation}
Operator $\mathcal B$ seems to be a degree reducing operator over the spaces $P_{p,q}$'s.
By combining (\ref{exp3}) and (\ref{exp2}) we get
\begin{equation}\label{exp4}
8a{\mathcal B} Q_0+c_2c_2'R_2+c_3c_3'|z|^2R_3\cdots+c_mc_m'|z|^{2m-4}R_m=0.
\end{equation}
Since the spaces $H_{p,q}$'s are orthogonal among themselves, it follows that
\[8a{\mathcal B} Q_0+c_2c_2'R_2=0,~R_3=0,\ldots, R_m=0.\]
By substituting these values in (\ref{exp5}), we get
\begin{equation}\label{exp7}
HQ=az_1\bar z_2Q_0+ |z|^2R_1-\frac{8a}{c_2c_2'}|z|^4{\mathcal B} Q_0.
\end{equation}
Once again applying $\Delta$ to (\ref{exp7}), we get
\[0=4a\mathcal AQ_0+4(p+q)R_1-\frac{32a}{c_2c_2'}(p+q+1)|z|^2{\mathcal B} Q_0.\]
Finally, after substituting the value of $R_1$ to (\ref{exp7}), we can write
\begin{equation}\label{exp8A}
HQ=a\left(z_1\bar z_2Q_0+ \gamma|z|^2\mathcal AQ_0 +\delta|z|^4{\mathcal B} Q_0\right),
\end{equation}
where $\gamma$ and $\delta$ are non-zero constants and independent of $a.$
\end{proof}

\noindent{{\em Proof of Theorem 3.3}}.
From Lemma \ref{lemma8}, we infer that  $HQ$ is completely determined by $Q_0.$ From (\ref{exp8}) and
in view of Lemma \ref{lemma2}, it is clear that $HQ=$ harmonic part of
$(az_1\bar z_2Q_0),$ because the rest of the terms will not contribute to $HQ,$
since $HQ$ is harmonic. This says that product $HQ$ depends only upon harmonic
part $Q_0$ of $Q$ and hence, without loss of generality, we can assume $Q$  to be
a homogeneous harmonic polynomial. That is, $\Delta Q=0.$

\smallskip

By the given condition, we have $\Delta(HQ)=0.$ Therefore, by using the condition that
$Q$ is harmonic, we get
\[ a\Delta\left(z_1\bar z_2Q\right)+\Delta\left(|z|^2 Q\right)=0.\]
In view of equation (\ref{exp10}) and Lemma \ref{lemma1}, we obtain that
\begin{equation}\label{exp6}
 a\mathcal AQ+(n+p+q)Q=0.\\
\end{equation}
This says that $Q$ is an eigenfunction of the operator $\mathcal A.$
Since by Lemma \ref{lemma6}, $\mathcal A$ is a self-adjoint operator, therefore,
its eigenvalues must be real.
From (\ref{exp6}), we have
\[\mathcal AQ= -\dfrac{(n+p+q)}{a}Q.\]
If $a=\alpha+i\beta$ and $\beta\neq0.$ Then $Q$ has to be identically zero. If $\beta=0,$
then we can use the rotation
\[\sigma=\left(
  \begin{array}{ccccc}
    e^{\frac{i\theta}{2}} & 0 & 0\\
    0 & e^{\frac{i\theta}{2}} &0 \\
    0 & 0 & I_{n-2} \\
    \end{array}
\right) \in U(n),\]
where $\theta$ is any real number other than even multiple of $\pi.$
This gives $$\sigma\cdot\left(z_1, z_2, z_3,\ldots,z_n\right)=
\left({e^{\frac{i\theta}{2}}z_1, e^{-\frac{i\theta}{2}}z_2,z_3, \ldots, z_n}\right).$$
Since $\Delta (HQ)=0$ and $\sigma\in U(n),$ therefore,  by using the fact that $\Delta$
is rotation invariant, it implies that $\sigma\cdot\Delta(HQ)=0.$ That is,
$\Delta(HQ)(\sigma\cdot z)=0,$ which in turn implies that
\[\Delta\left\{\left(ae^{i\theta}z_1\bar z_2+|z|^2\right)Q^\sigma(z)\right\}=0,\]
where $Q^\sigma(z)=Q(\sigma\cdot z).$
By a similar calculation as to the previous case, we can write
\[(n+p+q)Q^\sigma+ae^{i\theta}\mathcal A Q^\sigma=0.\]
Thus $Q^\sigma$ is an eigenfunction of $\mathcal A$ with complex eigenvalue having
non-zero imaginary part. Therefore,  $Q^\sigma=0$  and hence $Q=0.$ Thus, we infer that
Theorem \ref{th2} is valid for any complex number $a\neq0.$

\begin{corollary}\label{cor1}
Let $s\in\mathbb Z_+$ and $s\geq1.$ Then there exists $P_o\in P_{s, s}$ such that
$P_o^{-1}(0)$ is a non-harmonic complex cone.
\end{corollary}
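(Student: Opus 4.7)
The plan is to exhibit $P_o \in P_{s,s}$ explicitly by simply raising the degree-$(1,1)$ polynomial $H(z)=az_1\bar{z}_2+|z|^2$ (with $a\neq 0$) from Theorem \ref{th2} to the $s$-th power. That is, I would set
\[
P_o(z)=H(z)^s=\left(az_1\bar{z}_2+|z|^2\right)^s.
\]
First I would verify that $P_o \in P_{s,s}$. Since both $z_1\bar{z}_2$ and $|z|^2=\sum_{k=1}^n z_k\bar{z}_k$ have bi-degree $(1,1)$, we have $H \in P_{1,1}$; and because bi-degrees add under multiplication of polynomials, $H^s \in P_{s,s}$.

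Next I would observe that the zero set is unchanged under taking powers, so
\[
P_o^{-1}(0)=\{z\in\mathbb{C}^n:H(z)^s=0\}=\{z\in\mathbb{C}^n:H(z)=0\}=H^{-1}(0).
\]
In particular $P_o^{-1}(0)$ is a complex cone, since $H$ is bi-graded homogeneous so $H(\lambda z)=|\lambda|^{2}H(z)$ for $\lambda\in\mathbb{C}$, which preserves the zero set.

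Finally I would invoke Theorem \ref{th2}: any harmonic polynomial of the form $HQ$ with $Q\in P_{p,q}$ must have $Q\equiv 0$, and since $H$ is irreducible (as checked just before the statement of Theorem \ref{th2}), the containment $H^{-1}(0)\subseteq R^{-1}(0)$ for some $R\in H_{p,q}$ would force $H\mid R$, i.e.\ $R=HQ$, whence $R\equiv 0$. Thus $H^{-1}(0)$ is not contained in the zero set of any bi-graded homogeneous harmonic polynomial, so $P_o^{-1}(0)=H^{-1}(0)$ is a non-harmonic complex cone. There is no real obstacle here beyond recording that the same cone $H^{-1}(0)$ is witnessed by a polynomial in every diagonal space $P_{s,s}$; the substantive work has already been carried out in Theorem \ref{th2}.
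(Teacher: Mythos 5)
Your proof is correct and uses the same witness $P_o=H^s$ as the paper, but your route to non-harmonicity is genuinely shorter. The paper argues by induction on $s$: assuming $\left(H^s\right)^{-1}(0)\subseteq R^{-1}(0)$ for some bi-graded harmonic $R$, it runs a case analysis on whether $H$ or $H^s$ divides $R$, and in the latter case writes $R=H^sQ$ and peels off factors of $H$ by repeated application of Theorem \ref{th2} (for $s=2$, from $\Delta\{H(HQ)\}=0$ one gets $HQ=0$, hence $Q=0$). You instead observe that $\left(H^s\right)^{-1}(0)=H^{-1}(0)$ as sets, so the non-harmonicity of the zero set of $H^s$ is literally the $s=1$ statement, already established from Theorem \ref{th2} together with the irreducibility of $H$; no induction is needed. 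This buys brevity and isolates the only substantive input, namely Theorem \ref{th2}. What the paper's longer argument implicitly establishes is the slightly stronger algebraic fact that no nonzero bi-graded harmonic polynomial is divisible by any power of $H$, but that is not required for the corollary as stated, which concerns only the zero set. Note that both arguments rest on the same step asserted (not fully justified) in the paper just before Theorem \ref{th2}: that $H^{-1}(0)\subseteq R^{-1}(0)$ together with irreducibility of $H$ forces $H\mid R$; you are on exactly the same footing as the paper there, so this is not a gap attributable to your write-up.
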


\begin{proof}
As in Theorem \ref{th2}, let $H(z)=az_1\bar z_2+|z|^2.$ We show that
$H^s$ is a required member which belongs to $P_{s,s}$ such that $\left(H^s\right)^{-1}(0)$
is a non-harmonic complex cone. We prove this using induction on $s.$ For $s=1,$
by Theorem \ref{th2}, $H\in P_{1,1}$ and $H^{-1}(0)$ is non-harmonic complex cone.
When $s=2,$ we need to show that $\left(H^2\right)^{-1}(0)\nsubseteq R^{-1}(0),$ for any
$R\in H_{s,t}$ and for all $s,t\in\mathbb Z_+.$ On the contrary, suppose
$\left(H^2\right)^{-1}(0)\subseteq R^{-1}(0),$ for some $R\in H_{s,t}.$ Then
either $H$ divide $R$ or $H^2$ divide $R.$ Then as a consequence of Theorem \ref{th2},
it implies that $R=0.$ That is,  $H$ does not divide $R.$ Therefore,
 $H^2$ divide $R$ and hence $R=H^2Q$ for some $Q\in P_{p,q}.$ This in turn
implies that $\Delta\{H(HQ)\}=0.$ By applying Theorem \ref{th2} once again, we get $HQ=0.$ Hence $Q=0.$
This proves the result for $s=2.$ Similarly, the result for an arbitrary $s$ is being followed
by induction on $s.$
\end{proof}

\begin{remark}
$(a).$ In Corollary \ref{cor1}, we have shown that for $s\geq1,$ each of the diagonal
space $P_{s,s}$ has at least one member which corresponds to a non-harmonic complex cone.
Hence there are plenty of complex cones which do not completely lay in to the level surface
of any bi-graded homogeneous harmonic polynomials. Roughly speaking, there are plenty of
homogeneous surfaces other than that  homogeneous harmonic surfaces. However, it is
intersecting to know that whether a given non-diagonal space $P_{s,t}$ has at least
one member which corresponds to a non-harmonic complex cone. This question is open
for the time being.

\smallskip
$(b).$ In a recent article \cite{Sri2}, we have observed some embedding property of
set of injectivity in higher dimensions. For instance, the set $\mathbb R\cup i\mathbb R$
is a set of injectivity for the TSM for $L^p(\mathbb C)$ with $1\leq p\leq2.$ Then we deduced
that the set $(\mathbb R\cup i\mathbb R)\times\mathbb C^n$ is a set of injectivity for the TSM
for $L^p(\mathbb C^{n+1})$ with $1\leq p\leq2.$ Therefore, for a given non-harmonic cone $C$
in $\mathbb C^n,$ it is interesting to know that whether the set $C\times\mathbb C^m$ is a set
of injectivity for the TSM for the certain class of continuous functions on $\mathbb C^{n+m}.$
This question is still unanswered and for the time being we leave it open.
\end{remark}

\section{Weighted twisted spherical mean}\label{section4}
In this section, we prove that complex cone is a set of injectivity for certain
weighted twisted spherical mean for the radial class of functions which satisfy
some exponential growth condition.

\smallskip

Let $P\in H_{p,q}$ and denote $d\nu_r=Pd\mu_r.$ Let $C$ be a complex cone in $\mathbb C^n.$
Then we have the following result.

\begin{theorem}\label{th4}
Let $f$ be a radial function on $\mathbb C^n$ such that $ e^{\frac{1}{4}|z|^2}f(z)\in L^p(\mathbb C^n),$
for $1\leq p<\infty.$ Suppose $f\times\nu_r(z)=0, ~\forall ~r>0$ and $\forall ~z\in C.$
Then $f=0$ a.e. if and only if $C\nsubseteq P^{-1}(0).$
\end{theorem}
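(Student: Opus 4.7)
The plan is to run the same overall strategy as in Theorem~\ref{th1}, but now exploit radiality of $f$ together with the Laguerre (special Hermite) expansion, reducing everything to a statement about the Laguerre coefficients of $f.$ Both directions of the ``if and only if'' are consequences of the same factorization: the easy direction says that if $C \subseteq P^{-1}(0),$ then any nonzero radial $f$ with the required decay automatically has $f \times \nu_r$ vanishing on $C,$ while the main direction is the injectivity claim itself.

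Assume therefore that $C \nsubseteq P^{-1}(0).$ Since $C$ is closed under complex scaling and $P \in H_{p,q}$ is bi-homogeneous with $P(\lambda z) = \lambda^p \bar\lambda^q P(z),$ choosing $z_0 \in C$ with $P(z_0) \neq 0$ produces a whole punctured complex line $\{\lambda z_0 : \lambda \in \mathbb{C}^\ast\} \subset C$ on which $P$ never vanishes and whose moduli $|\lambda z_0|$ exhaust $(0,\infty).$ Since $f$ is radial with $e^{|z|^2/4} f \in L^p(\mathbb{C}^n),$ expand $f$ via Lemma~\ref{lemma1C}:
\[
 f = \sum_{k=0}^\infty c_k\, \varphi_k^{n-1}.
\]

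The key step is to establish a closed-form factorization
\[
 \varphi_k^{n-1} \times (P\mu_r)(z) = P(z)\, \Phi_k(r, |z|),
\]
where $\Phi_k$ is built out of Laguerre polynomials of order $n+p+q-1.$ This reflects the standard fact, recorded in~\cite{T}, that twisted convolution by a radial $L^1$ kernel preserves the subspace $\{P \cdot g : g \text{ radial}\}$ for $P \in H_{p,q}.$ Summing and using the hypothesis $f \times \nu_r(z) = 0$ on $C,$ we obtain $P(z) \sum_{k} c_k \Phi_k(r, |z|) = 0.$ Evaluating at $z = \lambda z_0$ (where $P(\lambda z_0) \neq 0$) and varying $\lambda \in \mathbb{C}^\ast$ yields
\[
 \sum_{k=0}^\infty c_k\, \Phi_k(r, s) = 0 \quad \text{for all } r, s > 0,
\]
after which completeness/linear independence of $\{\Phi_k\}_{k\geq 0}$ in the appropriate weighted class forces every $c_k = 0,$ whence $f \equiv 0.$

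The main obstacle is establishing the factorization $\varphi_k^{n-1} \times (P\mu_r)(z) = P(z)\, \Phi_k(r, |z|)$ with enough explicit control on $\Phi_k$ to conclude that the two-variable identity $\sum_k c_k \Phi_k(r,s) = 0$ forces each $c_k = 0.$ This reduces to classical identities for Laguerre polynomials interacting with bi-graded spherical harmonics under twisted convolution, with the weight $e^{|z|^2/4}$ functioning as the natural threshold separating convergence from divergence of the relevant Laguerre series and legitimising the term-by-term manipulations above.
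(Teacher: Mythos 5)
Your overall strategy is the same as the paper's: the factorization you posit is exactly Lemma~\ref{lemma3C} (a Hecke--Bochner type identity), which the paper combines with the radial expansion of Lemma~\ref{lemma1C} to reduce everything to statements about the Laguerre coefficients $c_k=\langle f,\varphi_k^{n-1}\rangle.$ The paper additionally justifies the term-by-term manipulations by first convolving with a radial approximate identity to reduce to $f\in L^2(\mathbb C^n)$ (Proposition~\ref{prop1}), and it extracts the coefficient identities via the Weyl-correspondence operator $P(\widetilde A)$ rather than by summing the series directly; but these are implementation details, not a different route.

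There is, however, a genuine gap at your final step. The factorization is \emph{not} of the form $\varphi_k^{n-1}\times\nu_r=P(z)\Phi_k(r,|z|)$ with $\{\Phi_k\}_{k\ge0}$ linearly independent: by Lemma~\ref{lemma3C} one has $\varphi_k^{n-1}\times\nu_r\equiv 0$ for all $k<q,$ i.e.\ $\Phi_k\equiv0$ for $0\le k\le q-1.$ Consequently the two-variable identity $\sum_k c_k\Phi_k(r,s)=0$ can only force $c_k=0$ for $k\ge q$; it says nothing about $c_0,\dots,c_{q-1},$ and your appeal to ``completeness/linear independence of $\{\Phi_k\}$'' to conclude $c_k=0$ for \emph{every} $k$ is false whenever $q\ge1.$ This is not a technicality: Remark~\ref{Crk1} exhibits $P\in H_{0,1}$ with $\varphi_0^{n-1}\times\nu_r\equiv0,$ so $f=\varphi_0^{n-1}$ is a nonzero radial counterexample in the class $e^{|z|^2/4}f\in L^\infty.$ The missing idea is that after the vanishing hypothesis kills $c_k$ for $k\ge q,$ the surviving function is the finite sum $\sum_{k<q}c_k\varphi_k^{n-1},$ and
\[
e^{\frac14|z|^2}\sum_{k<q}c_k\varphi_k^{n-1}(z)=\sum_{k<q}c_k L_k^{n-1}\left(\tfrac12|z|^2\right)
\]
is a polynomial in $|z|^2,$ which lies in $L^p(\mathbb C^n)$ for some finite $p$ only if it vanishes identically. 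This is the actual role of the hypothesis $e^{|z|^2/4}f\in L^p,$ $p<\infty$ --- it is the last step of the argument, not merely a convergence threshold for the Laguerre series as your write-up suggests.
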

Theorem \ref{th4}, does not hold for $p=\infty$ as can be seen from the following weighted
functional equations for the spherical function $\varphi_k^{n-1}.$
\begin{lemma}\label{lemma3C}\emph{\cite{T}}
For $z\in\mathbb C^n,$ let $P\in H_{p,q}$ and $d\nu_r=Pd\mu_r$.
Then
\[\varphi_k^{n-1}\times\nu_r(z)=
(2\pi)^{-n}C(n,p,q)r^{2(p+q)}\varphi_{k-q}^{n+p+q-1}(r)P(z)\varphi_{k-q}^{n+p+q-1}(z),\]
if $k\geq q$ and ~$0$ otherwise.
\end{lemma}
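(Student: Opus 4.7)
The crux of the calculation is a cancellation between the Gaussian factor of $\varphi_k^{n-1}$ and the twisting phase. Writing $\varphi_k^{n-1}(z-w) = L_k^{n-1}(\tfrac{1}{2}|z-w|^2)\,e^{-|z-w|^2/4}$ and using $|z-w|^2 = |z|^2+|w|^2-(z\cdot\bar w + \bar z\cdot w)$ together with $\tfrac{i}{2}\mathrm{Im}(z\cdot\bar w) = \tfrac{1}{4}(z\cdot\bar w - \bar z\cdot w)$, the exponential combination collapses neatly to $e^{-\frac{1}{4}(|z|^2+|w|^2)}\,e^{\frac{1}{2}z\cdot\bar w}$. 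After the substitution $w=r\omega$ with $\omega\in S^{2n-1}$ and the homogeneity $P(r\omega)=r^{p+q}P(\omega)$, I would obtain
$$\varphi_k^{n-1}\times\nu_r(z) \;=\; r^{p+q}\, e^{-\frac{1}{4}(|z|^2+r^2)} \!\int_{S^{2n-1}}\! L_k^{n-1}\bigl(\tfrac{1}{2}|z-r\omega|^2\bigr)\, e^{\frac{r}{2} z\cdot\bar\omega}\, P(\omega)\, d\sigma(\omega),$$
where $d\sigma$ is the normalized surface measure on the unit sphere. This reduction is the single most important step: it converts a twisted integral on a sphere of radius $r$ into an essentially untwisted spherical integral against $P$.

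To evaluate the remaining spherical integral I would appeal to the Funk--Hecke-type theorem for bigraded spherical harmonics on $S^{2n-1}$ under $U(n)$ (cf.\ \cite{Ru}, Ch.~12). For any $P\in H_{p,q}$ and any sufficiently regular function $\Psi$ of two complex variables,
$$\int_{S^{2n-1}} P(\omega)\,\Psi(z\cdot\bar\omega,\bar z\cdot\omega)\, d\sigma(\omega) \;=\; \kappa_{n,p,q}\, P(z)\cdot \Theta_\Psi\bigl(|z|^2\bigr),$$
the scalar $\Theta_\Psi$ being given by a one-dimensional Jacobi integral built explicitly from $\Psi$. In our setting the integrand is a polynomial in $(z\cdot\bar\omega,\bar z\cdot\omega)$ coming from $L_k^{n-1}(\tfrac12|z-r\omega|^2)$ times the exponential $e^{\frac{r}{2}z\cdot\bar\omega}$, both of which fall into the admissible class; the Funk--Hecke reduction thus extracts the factor $P(z)$ and leaves a joint power series in $|z|^2$ and $r^2$.

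The remaining task is to recognize this series as $\varphi_{k-q}^{n+p+q-1}(r)\,\varphi_{k-q}^{n+p+q-1}(z)$, up to the constant $C(n,p,q)$. Since the Gaussian prefactor $e^{-\frac{1}{4}(|z|^2+r^2)}$ is already present, only the polynomial part $L_{k-q}^{n+p+q-1}(\tfrac12 r^2)\,L_{k-q}^{n+p+q-1}(\tfrac12 |z|^2)$ must be matched. The cleanest route is through the Laguerre generating identity $\sum_k L_k^{n-1}(x)t^k = (1-t)^{-n}e^{-xt/(1-t)}$: multiplying the displayed formula by $t^k$ and summing lets me complete the square in $\omega$ before applying Funk--Hecke, after which a Bateman-type bilinear identity for Laguerre polynomials produces the desired $L_{k-q}^{n+p+q-1}(r^2/2)L_{k-q}^{n+p+q-1}(|z|^2/2)$ factor with an explicit constant.

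The asymmetric index shift $k\mapsto k-q$ and the vanishing for $k<q$ emerge naturally from the derivation: only the factor $e^{\frac{r}{2}z\cdot\bar\omega}$ (depending on $\bar\omega$ alone) couples to the $z^\alpha$-content of $P(\omega)$, while the $q$ antiholomorphic factors of $P$ must be absorbed by powers of $\omega$ supplied by $L_k^{n-1}(\tfrac12|z-r\omega|^2)$, which offers such powers only up to total degree $k$. I expect the main obstacle to be the combinatorial bookkeeping in the bilinear Laguerre step and the exact determination of $C(n,p,q)$; once the phase/Gaussian cancellation in the first paragraph is noticed, the rest is a disciplined if intricate computation along standard lines.
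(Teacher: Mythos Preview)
The paper does not give its own proof of this lemma: it is quoted verbatim from Thangavelu's monograph \cite{T} and used as a black box. There is therefore nothing in the paper to compare your argument against.

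That said, your sketch is the standard route and is essentially how the result is obtained in \cite{T} (and, in a closely related form, in Geller's paper \cite{Ge}). The Gaussian--phase cancellation you isolate in the first paragraph is exactly the mechanism, and the reduction to a Funk--Hecke integral of the form $\int_{S^{2n-1}} F(z\cdot\bar\omega,\bar z\cdot\omega)\,P(\omega)\,d\sigma(\omega)$ against a bigraded harmonic $P\in H_{p,q}$ is the correct next step. In the literature the final identification of the radial factor is usually done not via the generating function but by writing $L_k^{n-1}$ in terms of the special Hermite functions $\Phi_{\alpha\beta}$ and using their transformation law under the Weyl correspondence (this is the content of the companion result you see stated as Theorem~\ref{th5} in the paper); your bilinear-Laguerre approach is an equivalent bookkeeping device and would also succeed, with the caveat that the constant $C(n,p,q)$ and the extra factor $r^{p+q}$ needed to reach $r^{2(p+q)}$ only emerge once the Jacobi/Funk--Hecke integral is carried out in full.
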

\begin{remark}\label{Crk1}
From Lemma \ref{lemma3C}, it can  be seen that Theorem \ref{th4} does not hold for
$p=\infty.$ For instance, take $P\in H_{0,1}$ and let
$d\nu_r=Pd\mu_r$. Then $\varphi_0^{n-1}\times\nu_r(z)=0$, where
$\varphi_0^{n-1}(z)=e^{-\frac{1}{4}|z|^2}.$
\end{remark}

For the proof of Theorem \ref{th4}, we need the following result from \cite{Sri3} which is
related to the Weyl correspondence of bi-graded spherical harmonics. Let us consider the
following invariant differential operators which arises in study of the twisted convolution
on $\mathbb C^n.$ For $\lambda\in\mathbb R\setminus\{0\},$ let
\[\widetilde Z_{j,\lambda}=\frac{\partial}{\partial z_j}-\frac{\lambda}{4}\bar z_j \text{ and }
\widetilde Z_{j,\lambda}^{\ast}=\frac{\partial}{\partial\bar z_j}+\frac{\lambda}{4} z_j, ~j=1,2,\ldots, n.\]
Let $P$ be a bi-graed homogeneous harmonic polynomial on $\mathbb C^n$ with expression
\[P(z)=\sum_{|\alpha|=p}\sum_{|\beta|=q}c_{\alpha\beta}z^\alpha\bar{z}^\beta.\]
Then by using a result of Geller (\cite{Ge}, p.616, Proposition 2.7.) about Weyl correspondence
of the spherical harmonics, the operator analogue of $P(z)$  can be expressed as
\[P(\widetilde Z)=\sum_{|\alpha|=p}\sum_{|\beta|=q}c_{\alpha\beta}{\widetilde {Z^{\ast}}}^\alpha\widetilde Z^\beta.\]
The following result has been proved by the author in article \cite{Sri3}.
\begin{theorem}\cite{Sri3}\label{th5}
Let $P\in H_{p,q}$. Then
\begin{equation}\label{exp17}
P(\widetilde Z_\lambda)\varphi_{k,\lambda}^{n-1}=\left\{
  \begin{array}{ll}
   (-2\lambda)^{-p-q}P\varphi_{k-p,\lambda}^{n+p+q-1} , & \text{if } \lambda<0, ~k\geq q ; \\
   (-2\lambda)^{-p-q}P\varphi_{k-q,\lambda}^{n+p+q-1} , & \text{if } \lambda>0,  ~k\geq p.
  \end{array}
 \right.
\end{equation}
\end{theorem}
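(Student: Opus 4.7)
The plan is to reduce first to the normalized case $|\lambda|=1$ via the dilation $z\mapsto\sqrt{|\lambda|}z$, which rescales $\varphi^{n-1}_{k,\lambda}$ and the operators $\widetilde Z_{j,\lambda},\widetilde Z_{j,\lambda}^{\ast}$ and extracts precisely the factor of $|\lambda|^{p+q}$ that eventually appears in the stated constant $(-2\lambda)^{-p-q}$; the two sign-cases $\lambda>0$ and $\lambda<0$ are then related by the complex-conjugation involution, which interchanges $\widetilde Z_j\leftrightarrow-\widetilde Z_j^{\ast}$ and swaps the two indices of the special Hermite basis, accounting for the $k\geq p$ versus $k\geq q$ distinction and the $k-p$ versus $k-q$ shifts in the two branches.

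With this reduction in hand, I would use the special Hermite expansion (\ref{ACexp4}) to write $\varphi_{k,\lambda}^{n-1}(z)$ as a scalar multiple of $\sum_{|\gamma|=k}\phi^\lambda_{\gamma,\gamma}(z)$, and then exploit the creation/annihilation structure of the twisted derivatives. For $\lambda>0$, the operators $\widetilde Z_{j,\lambda}^{\ast}$ act (up to sign and a factor of $\lambda/2$) as lowering operators on the first index of $\phi^\lambda_{\alpha,\beta}$ and $\widetilde Z_{j,\lambda}$ as raising operators, while for $\lambda<0$ the two roles swap; this is essentially the content of Geller's formula (\cite{Ge}, Proposition~2.7) underlying the correspondence $P(z)\leftrightarrow P(\widetilde Z)$. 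Applying $P(\widetilde Z_\lambda)=\sum_{|\alpha|=p,\,|\beta|=q}c_{\alpha\beta}(\widetilde Z^{\ast}_\lambda)^\alpha\widetilde Z_\lambda^\beta$ term-by-term to each $\phi^\lambda_{\gamma,\gamma}$ (which requires $\gamma\geq\alpha$ component-wise, whence the hypothesis $k\geq p$ for $\lambda>0$) produces a linear combination of shifted special Hermite functions; regrouping by the new first index and using the harmonicity of $P$ to kill off-type contributions, the residual sum should collapse---via a second application of (\ref{ACexp4}), but now with parameter $n+p+q-1$ in place of $n-1$---into $P(z)$ times $\varphi^{n+p+q-1}_{k-q,\lambda}(z)$, with the constant $(-2\lambda)^{-p-q}$ appearing as the cumulative effect of the $\lambda/2$-factors accrued through the $p+q$ raising/lowering steps.

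The step I expect to be the most delicate is the combinatorial collapse just described: \emph{a priori}, $P(\widetilde Z_\lambda)\varphi^{n-1}_{k,\lambda}$ decomposes into a linear combination of functions of the form $R(z)\varphi^{n+p'+q'-1}_{m,\lambda}(z)$ for various $R\in H_{p',q'}$ with $p'+q'\leq p+q$, and one must verify that the condition $\Delta P=0$ is exactly what forces all the contributions outside the distinguished type $R=P$, $(p',q')=(p,q)$, $m=k-q$ to cancel. Should this direct argument prove unwieldy, I would fall back on an eigenfunction characterization: using the commutator $[\widetilde Z_{j,\lambda},\widetilde Z_{k,\lambda}^{\ast}]=\tfrac{\lambda}{2}\delta_{jk}$ one checks by induction on bi-degree that $P(\widetilde Z_\lambda)\varphi^{n-1}_{k,\lambda}$ is an eigenfunction of the special Hermite operator $L_\lambda$ with eigenvalue $(2(k-q)+n+p+q)|\lambda|$ and lies in the $U(n)$-isotype $H_{p,q}$; the uniqueness of the radial $H_{p,q}$-valued eigenfunction of $L_\lambda$ with a prescribed eigenvalue then identifies the result as a constant multiple of $P(z)\varphi^{n+p+q-1}_{k-q,\lambda}(z)$, and the constant $(-2\lambda)^{-p-q}$ would be pinned down either by evaluating both sides against $\overline{P(z)}$ using the orthogonality of the special Hermite basis, or by comparing the lowest-order Taylor coefficients at $z=0$.
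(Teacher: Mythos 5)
The first thing to say is that this paper contains no proof of Theorem \ref{th5} to compare against: the result is imported verbatim from \cite{Sri3} (``The following result has been proved by the author in article \cite{Sri3}''), so your reconstruction can only be judged on its own terms. On those terms the outline is the standard one for Weyl-correspondence / Hecke--Bochner identities of this kind (Geller \cite{Ge}, Thangavelu \cite{T}, and \cite{Sri3} itself): dilate to $|\lambda|=1$, pass between the two sign cases by conjugation, and then either track the raising/lowering action of $\widetilde Z_{j,\lambda}$, $\widetilde Z_{j,\lambda}^{\ast}$ on the special Hermite basis or characterize the output by its eigenvalue and its $U(n)$-isotype. Your fallback argument is the one that actually closes, and it is essentially how such identities are established in the literature.

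Two concrete cautions. First, the eigenvalue in your fallback is off: the commutator $[\widetilde Z_{j,\lambda},\widetilde Z_{k,\lambda}^{\ast}]=\tfrac{\lambda}{2}\delta_{jk}$ shows that $P(\widetilde Z_\lambda)$ commutes with $L_\lambda$, so $P(\widetilde Z_\lambda)\varphi_{k,\lambda}^{n-1}$ has the \emph{same} eigenvalue $(2k+n)|\lambda|$ as $\varphi_{k,\lambda}^{n-1}$; correspondingly $L_\lambda\bigl(P\varphi_{j,\lambda}^{n+p+q-1}\bigr)=(2(j+q)+n)|\lambda|\,P\varphi_{j,\lambda}^{n+p+q-1}$ for $P\in H_{p,q}$ and $\lambda>0$, which is exactly what forces the subscript $k-q$. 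Your value $(2(k-q)+n+p+q)|\lambda|=(2k+n+p-q)|\lambda|$ differs from this by $(p-q)|\lambda|$, and since the eigenvalue is precisely the datum that pins down $k-q$ versus $k-p$, this bookkeeping must be done exactly; note in this connection that the paper's own specialization (\ref{exp18}) (which corresponds to $\lambda=-1$) carries the subscript $k-q$ while the $\lambda<0$ branch of Theorem \ref{th5} carries $k-p$, so even the source is not obviously internally consistent here, and your conjugation symmetry (which swaps $P\in H_{p,q}$ with $\bar P\in H_{q,p}$ as well as the sign of $\lambda$) is the right tool to settle it. Second, your primary combinatorial route defers the entire content of the theorem --- the collapse of the sum forced by $\Delta P=0$ --- to a step you acknowledge is not carried out, and the constant $(-2\lambda)^{-p-q}$ is likewise left to a normalization device; so as written the proposal is a credible plan rather than a proof, with the fallback being the branch worth completing.
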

Consider the following right invariant differential operators for the twisted convolution:
\[\tilde A_j=\frac{\partial}{\partial z_j}+\frac{1}{4}\bar{z}_j ~\mbox{and}~\tilde A_j^*=
\frac{\partial}{\partial\bar{z}_j}-\frac{1}{4}z_j;~j=1,2,\ldots,n.\]
Let $\varphi_k^{n-1}(z)=L_k^{n-1}(\frac{1}{2}|z|^2)e^{-\frac{1}{4}|z|^2}.$
Then by Theorem \ref{th5}, we get
\begin{equation}\label{exp18}
P(\widetilde A)\varphi_k^{n-1}=(2)^{-p-q}P\varphi_{k-q}^{n+p+q-1} , \text{if }~k\geq q.
\end{equation}

Suppose $f$ be a function on $\mathbb C^n$ such that $e^{\frac{1}{4}|z|^2}f(z)\in L^p(\mathbb C^n),$
for $1\leq p<\infty.$ Let $\varphi_\epsilon$ be a smooth, radial compactly supported
approximate identity on $\mathbb C^n.$ Then $f\times\varphi_\epsilon\in L^1\cap L^\infty(\mathbb C^n)$
and in particular $f\times\varphi_\epsilon\in L^2(\mathbb C^n).$ Let $d\nu_r=Pd\mu_r.$ Suppose
$f\times\nu_r(z)=0, \forall r>0$ and $\forall z\in C.$ Then by polar decomposition
$f\times P\varphi_{k-q}^{n+p+q-1}(z)=0, \forall k\geq q$ and $\forall z\in C.$
Since $\varphi_\epsilon$ is radial, we can write
\[f\times\varphi_\epsilon\times\nu_r(z)=\sum_{k\geq 0}B_k^n\left\langle\varphi_\epsilon,\varphi_k^{n-1}\right\rangle
f\times\varphi_k^{n-1}\times\nu_r(z).\]
By Lemma \ref{lemma3C}, it follows that
$f\times\varphi_\epsilon\times\nu_r(z)=0, \forall k\geq q$ and $\forall z\in C.$
Thus, without loss of generality, we can assume $f\in L^2(\mathbb C^n).$
Hence to prove the Theorem \ref{th4}, it is enough to prove the
following result.

\begin{proposition}\label{prop1}
Let $f\in L^2(\mathbb C^n)$ be radial and $ e^{\frac{1}{4}|z|^2}f(z)\in L^p(\mathbb C^n),$
for $1\leq p<\infty.$ Suppose $f\times\nu_r(z)=0, ~\forall ~r>0$ and $\forall ~z\in C.$
Then $f=0$ a.e. if and only if $C\nsubseteq P^{-1}(0).$
\end{proposition}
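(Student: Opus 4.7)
The plan is to expand $f$ in the special Hermite basis, apply the convolution formula of Lemma \ref{lemma3C}, and then use the complex-scaling invariance of the cone together with Laguerre orthogonality to pick off the expansion coefficients. First I would write $f = \sum_{k\geq 0} a_k\,\varphi_k^{n-1}$ with $a_k = B_k^n\langle f,\varphi_k^{n-1}\rangle$ via Lemma \ref{lemma1C}, and substitute into $f\times\nu_r(z)$ term-by-term using Lemma \ref{lemma3C} to obtain
\[
f\times\nu_r(z) \;=\; \kappa\, r^{2(p+q)}\, P(z)\sum_{k\geq q} a_k\,\varphi_{k-q}^{n+p+q-1}(r)\,\varphi_{k-q}^{n+p+q-1}(z),
\]
where $\kappa=(2\pi)^{-n}C(n,p,q)$. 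The objective from this identity is to force every $a_k$ to vanish.

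To separate the $r$- and $z$-dependence, I would choose $z_0\in C$ with $P(z_0)\neq 0$, which exists because $C\nsubseteq P^{-1}(0)$. Since $\lambda z_0\in C$ for every $\lambda\in\mathbb C$ and $P$ is $(p,q)$-homogeneous, one has $P(\lambda z_0)=\lambda^p\bar\lambda^q P(z_0)\neq 0$ whenever $\lambda\neq 0$; dividing out the nonzero prefactors yields
\[
\sum_{k\geq q} a_k\,\varphi_{k-q}^{n+p+q-1}(r)\,\varphi_{k-q}^{n+p+q-1}(\lambda z_0) \;=\; 0,\qquad \forall\, r>0,\ \lambda\in\mathbb C\setminus\{0\}.
\]
Because $\varphi_m^{n+p+q-1}$ is radial, $\varphi_{k-q}^{n+p+q-1}(\lambda z_0)$ depends only on $s=|\lambda|\,|z_0|$, which sweeps $(0,\infty)$. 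I would then multiply by $\varphi_{j-q}^{n+p+q-1}(s)\,s^{2(n+p+q)-1}$ and integrate in $s$; the orthogonality of the Laguerre functions of order $n+p+q-1$ should collapse the sum and deliver $a_j=0$ for every $j\geq q$.

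What remains is the finite tail $f = \sum_{k=0}^{q-1} a_k\,\varphi_k^{n-1}$, and this is exactly where the weighted $L^p$-hypothesis becomes indispensable: $e^{|z|^2/4}f(z)=\sum_{k=0}^{q-1}a_k\,L_k^{n-1}(|z|^2/2)$ is a polynomial in $|z|^2$ of degree at most $q-1$, and a nonzero polynomial on $\mathbb C^n$ cannot lie in $L^p(\mathbb C^n)$ for any $1\leq p<\infty$. Hence every $a_k$ vanishes and $f\equiv 0$ a.e. The converse direction is straightforward: if $C\subseteq P^{-1}(0)$, then each $\varphi_k^{n-1}\times\nu_r$ carries $P(z)$ as an explicit factor and vanishes on $C$, so e.g.\ $f(z)=e^{-|z|^2/2}$, which lies in the stated weighted class, produces a nonzero counterexample in the spirit of Remark \ref{Crk1}.

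The step I expect to be the main obstacle is justifying the two interchanges of operations: the term-by-term application of $\times\nu_r$ to the special Hermite series for $f$, and the exchange of summation and integration against $\varphi_{j-q}^{n+p+q-1}(s)\,s^{2(n+p+q)-1}$ in the orthogonality argument. Both should be controllable by the $L^2$-convergence of the expansion together with Cauchy--Schwarz and the Gaussian decay of the Laguerre functions, but they need careful handling before the orthogonality conclusion can be drawn cleanly.
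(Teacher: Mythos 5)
Your overall strategy coincides with the paper's: show that the hypothesis forces $\langle f,\varphi_k^{n-1}\rangle=0$ for all $k\geq q$ whenever $C\nsubseteq P^{-1}(0)$, and then use the weighted $L^p$ condition to kill the remaining finite sum $\sum_{k<q}a_k\varphi_k^{n-1}$ (whose product with $e^{|z|^2/4}$ is a polynomial, hence not in $L^p$ unless zero). Your closing step, your use of the complex scaling $\lambda z_0$ together with the $(p,q)$-homogeneity of $P$, and your explicit treatment of the converse direction are all correct and consistent with the paper. Where you diverge is in the mechanism for isolating the coefficients: the paper does not expand $f$ and convolve term by term with $\nu_r$. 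Instead it first integrates $f\times\nu_r(z)=0$ in $r$ (the ``polar decomposition'') to obtain $f\times\bigl(P\varphi_{k-q}^{n+p+q-1}\bigr)(z)=0$ for each fixed $k\geq q$, then invokes the Weyl correspondence identity $P(\widetilde A)\varphi_k^{n-1}=2^{-p-q}P\varphi_{k-q}^{n+p+q-1}$ and the right invariance of $P(\widetilde A)$ to rewrite this as $P(\widetilde A)\bigl(f\times\varphi_k^{n-1}\bigr)(z)=0$, and finally uses the exact projection identity $f\times\varphi_k^{n-1}=(2\pi)^nB_k^n\langle f,\varphi_k^{n-1}\rangle\varphi_k^{n-1}$ to land on $\langle f,\varphi_k^{n-1}\rangle P(z)L_{k-q}^{n+p+q-1}\bigl(\tfrac12|z|^2\bigr)=0$ on $C$. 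The scaling of $C$ then removes the Laguerre factor, with no infinite sums ever interchanged with anything.

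This difference matters for the step you yourself flag as the main obstacle. Applying $\times\nu_r$ term by term to the $L^2$-convergent series and evaluating pointwise on $C$ is genuinely delicate: $g\mapsto g\times\nu_r(z)$ at a fixed $z$ is not a bounded functional on $L^2(\mathbb C^n)$, $C$ has measure zero, and for a general radial $L^2$ function the coefficients $a_k$ need not decay fast enough to beat the polynomial growth in $k$ of $\varphi_{k-q}^{n+p+q-1}(r)$ at fixed $r$, so Cauchy--Schwarz alone will not close either interchange. The repair is exactly the paper's reordering of operations: integrate in $r$ against $\varphi_{j-q}^{n+p+q-1}(r)$ times the appropriate power of $r$ \emph{before} expanding $f$, so that one is convolving $f$ with the fixed Schwartz function $P\varphi_{j-q}^{n+p+q-1}$; that convolution is a continuous function of $z$, term-by-term expansion is then justified by $L^2$-continuity, and your Laguerre-orthogonality computation becomes the single exact identity of the paper. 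With that modification your argument is complete; as written, the two interchanges constitute a real gap rather than a routine verification. One small further point in your favour: your counterexample $e^{-|z|^2/2}$ for the converse is correctly chosen to lie in the weighted class (unlike $\varphi_0^{n-1}=e^{-|z|^2/4}$, which does not), which the paper leaves implicit.
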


\begin{proof}
Given that $f\times\nu_r(z)=0, ~\forall ~r>0$ and $\forall ~z\in C.$ By polar decomposition
$f\times P\varphi_{k-q}^{n+p+q-1}(z)=0, \forall k\geq q$ and $\forall z\in C.$
Using the identity (\ref{exp18}), we can write
$f\times P(\widetilde A)\varphi_{k}^{n-1}(z)=0, \forall k\geq q$ and $\forall z\in C.$
Since $ P(\widetilde A)$ is a right invariant operator, it follows that
\begin{equation}\label{exp19}
P(\widetilde A)\left(f\times\varphi_{k}^{n-1}\right)(z)=0.
\end{equation}
 Since $f$ is radial, therefore, it can be expressed as
\[f=\sum_{k=0}^\infty B^n_k\left\langle f,\varphi^{n-1}_k\right\rangle\varphi^{n-1}_k.\]
Since the Laguerre functions satisfy the orthogonality conditions
$\varphi_{k}^{n-1}\times\varphi_{j}^{n-1}=(2\pi)^{n}\delta_{jk}\varphi_{k}^{n-1}.$
Therefore, $f\times\varphi_{k}^{n-1}=(2\pi)^{n}B^n_k\left\langle f,\varphi^{n-1}_k\right\rangle\varphi^{n-1}_k.$
Hence, from (\ref{exp19}) it implies that
$\left\langle f,\varphi^{n-1}_k\right\rangle P(\widetilde A)\varphi^{n-1}_k(z)=0.$
Once again by applying the identity (\ref{exp18}), we get
\[\left\langle f,\varphi^{n-1}_k\right\rangle P(z)\varphi^{n+p+q-1}_{k-q}(z)=0.\]
That is,
\[\left\langle f,\varphi^{n-1}_k\right\rangle P(z)L^{n+p+q-1}_{k-q}\left(\frac{1}{2}|z|^2\right)=0,\]
whenever, $k\geq q$ and $ z\in C.$ Since $C$ is closed under scaling, it follows that
\[\left\langle f,\varphi^{n-1}_k\right\rangle P(z)L^{n+p+q-1}_{k-q}\left(\frac{1}{2}r^2|z|^2\right)=0,\]
for all $r>0.$ Thus, we get $\left\langle f,\varphi^{n-1}_k\right\rangle P(z)=0,$ whenever, $k\geq q$ and $ z\in C.$
Hence $\left\langle f,\varphi^{n-1}_k\right\rangle=0,$ for all $k\geq q$  if and only if $C\nsubseteq P^{-1}(0).$
Therefore, $f$ is a finite linear combination of $\varphi^{n-1}_k$'s and by the given growth condition on
$f,$ it follows that $f=0.$ Thus, we infer that $f=0$  if and only if $C\nsubseteq P^{-1}(0).$
\end{proof}
\begin{remark}
Theorem \ref{th4} has been work out only for radial class of functions. However, the question that
a complex cone is set of injectivity for the weighted twisted spherical mean for those functions $f$
on $\mathbb C^n$ satisfying the growth condition $ e^{\frac{1}{4}|z|^2}f(z)\in L^p(\mathbb C^n)$
with $1\leq p<\infty$ is still unanswered.
\end{remark}

\smallskip

\bigskip

\noindent{\bf Acknowledgements:}\\
The author wishes to thank E. K. Narayanan
and S. Thangavelu for several fruitful discussions, during my visit to IISc, Bangalore.
The author would also like to gratefully acknowledge the support provided by
IIT Guwagati, Government of India.

\bigskip

\end{document}